
\documentclass{birkjour}
%

\usepackage{amsmath, amsthm, amsfonts, amssymb}
\usepackage{mathrsfs}
\usepackage{txfonts}
\usepackage{color}
\newtheorem{theorem}{Theorem}[section]
\newtheorem{definition}{Definition}[section]

\newtheorem{lemma}{Lemma}[section]
\newtheorem{prop}{Proposition}[section]
\theoremstyle{definition}

\allowdisplaybreaks

\usepackage[numbers]{natbib}
\setcitestyle{open={},close={}}

\def \l {\left}
\def \r {\right}

\def \bR {\Bbb R}

\def\be{\begin{equation}}
\def\ee{\end{equation}}
\def\ba{\begin{array}}
	\def\ea{\end{array}}
\def\bd{\begin{definition}}
	\def\ed{\end{definition}}
\def\bt{\begin{theorem}}
	\def\et{\end{theorem}}
\def\bc{\begin{corollary}}
	\def\ec{\end{corollary}}
\def\bl{\begin{lemma}}
	\def\el{\end{lemma}}
\def\bdm{\begin{displaymath}}
\def\edm{\end{displaymath}}



\begin{document}

%
%
%
%
%
%
%
%
%

\title[Multi-sublinear operators and their commutators]
 {Multi-sublinear operators and their commutators on product generalized mixed Morrey spaces}

\author[M. Wei]{Mingquan Wei}

\address{%
School of Mathematics and Stastics, Xinyang Normal University\\
Xinyang 464000, China}

\email{weimingquan11@mails.ucas.ac.cn}




\subjclass{Primary 42B20; Secondary 42B25, 42B35}

\keywords{Generalized mixed Morrey space, multi-sublinear operator, ${\rm BMO}$, commutators, multi-sublinear maximal operator, multilinear Calder{\'o}n-Zygmund operator}

\date{September 26, 2021}

\begin{abstract}
In this paper, we study the boundedness for a large class of multi-sublinear operators $T_m$
generated by multilinear Calder{\'o}n-Zygmund operators and their commutators $T^{b}_{m,i}~(i=1,\cdots,m)$  on the product generalized mixed Morrey spaces $M^{\varphi_1}_{\vec{q_1}}(\bR^n)\times\cdots\times M^{\varphi_m}_{\vec{q_m}}(\bR^n)$. We find the sufficient conditions on $(\varphi_1,\cdots,\varphi_m,\varphi)$ which ensure the boundedness of the operator $T_m$ from 
$M^{\varphi_1}_{\vec{q_1}}(\bR^n)\times\cdots\times M^{\varphi_m}_{\vec{q_m}}(\bR^n)$ to
$M^{\varphi}_{\vec{q}}(\bR^n)$.  Moreover, the sufficient conditions for the boundeness of $T^b_{m,i}$ from 
$M^{\varphi_1}_{\vec{q_1}}(\bR^n)\times\cdots\times M^{\varphi_m}_{\vec{q_m}}(\bR^n)$ to $M^{\varphi}_{\vec{q}}(\bR^n)$ are also studied.
As applications, we obtain the boundedness for the multi-sublinear maximal operator, the multilinear Calder{\'o}n-Zygmund operator and their commutators on product generalzied mixed Morrey spaces.
\end{abstract}

\maketitle
\section{Introduction}
\setcounter{equation}{0}
As we know, the multilinear Calder{\'o}n-Zygmund theory is a natural generalization of the linear case. The 
multilinear Calder{\'o}n-Zygmund operators were first introduced by Coifman
and Meyer [\cite{coifman1975commutators}] and were later systematically studied by Grafakos and Torres [\cite{grafakos2002multilinear}].

For $x\in\bR^n$, and $r>0$, let $B(x,r)$ be the open ball centered at $x$ with radius $r$, $B^c(x,r)$ denote its complement and $|B(x,r)|$ be the Lebesgue measure of the ball $B(x,r)$. We denote by $\vec{f}$ the $m$-tuple 
$(f_1,\cdots,f_m)$. Similarly, we also denote $\vec{y}=(y_1,\cdots,y_m)$ and $d\vec{y}=dy_1\cdots dy_m$.
For $n,m\in \mathbb{N}$, let $\bR^n$ be an $n$-dimensional Euclidean space and $(\bR^n)^m=\bR^n\times\cdots\times\bR^n$ be a $m$-fold
product spaces. 

Let $\vec{f}\in L^{1}_{\rm loc}(\bR^n)\times\cdots\times L^{1}_{\rm loc}(\bR^n)$. The multi-sublinear maximal operator $M_m$ is defined by 
$$M_m (\vec{f})(x)=\sup_{ r>0}\prod_{i=1}^m\frac{1}{|B(x,r)|}\int_B|f_i(y_i)|dy_i.$$
When $m=1$, we denote by $M_{1}=M$, the Hardy-Littlewood maximal operator.

For $p_i>1,~i=1,\cdots,m$, and $1/p=1/p_1+\cdots+1/p_m$, the boundedness of $M_m$ form product Lebesgue spaces $L^{p_1}(\bR^n)\times\cdots\times L^{p_m}(\bR^n)$ to $L^{p}(\bR^n)$ is obvious by using the boundedness of $M$ on $L^{p_i}(\bR^n)$ and H{\"o}lder's inequaly. One can also refer to [\cite{lerner2009new}] for the weighted boundedness of $M_m$.

In [\cite{grafakos2002multilinear}], Grafakos and Torres studied the multilinear Calder{\'o}n-Zygmund operators. We say $K_m$ is a multilinear Calder{\'o}n-Zygmund operator if for some $q_1,\cdots,q_m$ $\in [1,\infty)$ and $q\in (0,\infty)$ with $1/q=1/q_1+\cdots+1/q_m$, it can be extended to a bounded multilinear operator from $L^{q_1}(\bR^n)\times\cdots\times L^{q_m}(\bR^n)$ to $L^{q}(\bR^n)$, and  if there exists a kernel function $K(x,y_1,\cdots,y_m)$ defined off the diagonal $x=y_1=\cdots=y_m$ satisfying
$$|K(x,y_1,\cdots,y_m)|\leq C\l(\sum_{k=1}^m|x-y_k|\r)^{-mn},$$
such that $K_m$ can be written as 
\begin{equation*}
K_m (\vec{f})(x)=\int_{(\bR^n)^m}K(x,y_1,\cdots,y_m)f_1(y_1)\cdots f_m(y_m)dy_1\cdots y_m,
\end{equation*}
for $x\notin\cap_{j=1}^m\mathrm{supp}f_j$.
Moreover, for some $\epsilon>0$, $K(x,y_1,\cdots,y_m)$ satisfies the regularity condition
$$|K(x,y_1,\cdots,y_m)-K(x',y_1,\cdots,y_m)|\leq C\frac{|x-x'|^\epsilon}{\l(\sum_{k=1}^m|x-y_k|\r)^{mn+\epsilon}},$$
whenever $2|x-x'|\leq\max_{1\leq k\leq m}|x-y_k|$, and also for each fixed $k$ with $1\leq k\leq m$,
$$|K(x,y_1,\cdots,y_k,\cdots,y_m)-K(x,y_1,\cdots,y'_k,\cdots,y_m)|\leq C\frac{|y_k-y'_k|^\epsilon}{\l(\sum_{k=1}^m|x-y_k|\r)^{mn+\epsilon}},$$
whenever $2|y_k-y'_k|\leq\max_{1\leq k\leq m}|x-y_k|$.
Here and in what follows, $C$ is a constant which may vary from line to line.

Grafakos and Torres [\cite{grafakos2002multilinear}] proved that 
$K_m$ is bounded from $L^{p_1}(\bR^n)\times\cdots\times L^{p_m}(\bR^n)$ to $L^{p}(\bR^n)$ for $p_i>1~(i=1,\cdots,m)$
and $1/p=1/p_1+\cdots+1/p_m$, and bounded from  $L^{1}(\bR^n)\times\cdots\times L^{1}(\bR^n)$ to $L^{1/m,\infty}(\bR^n)$. We also refer the reader to [\cite{lerner2009new}] for the weighted estimates of $K_m$.

As extensions of the commutators of linear or sublinear operators, the multilinear commutators also attract much attention. 

Let $\vec{b}=(b_1,\cdots,b_m)\in L^1_{\rm loc}(\bR^n)\times\cdots\times L^1_{\rm loc}(\bR^n)$ and $\vec{f}\in L^{1}_{\rm loc}(\bR^n)\times\cdots\times L^{1}_{\rm loc}(\bR^n)$. The multi-sublinear maximal commutator $M^{\vec{b}}_{m}$ is defined by 
\begin{equation*}
M^{\vec{b}}_{m}(\vec{f})=\sum_{i=1}^{m}M^{\vec{b}}_{m,i}(\vec{f}),
\end{equation*}
where
$$M^{\vec{b}}_{m,i} (\vec{f})(x)=\sup_{ r>0}\frac{1}{|B(x,r)|}\int_B|b(x)-b(y_i)||f_i(y_i)|dy_i\times\sup_{ r>0}\prod_{k=1,k\neq i}^m\frac{1}{|B(x,r)|}\int_B|f_k(y_k)|dy_k.$$
When $m=i=1$, we denote by $M^b_{1,1}=M^b$, the maximal commutator of the Hardy-Littlewood maximal operator.

To study the boundedness of the commutators of some integral operators, we need the bounded mean oscillation space first introduced by John and Nirenberg [\cite{john1961functions}]. A locally integrable function $f$ belongs to BMO if 
\begin{equation*}
\|f\|_{\rm BMO}=
\sup_{x\in\bR^n,r>0}\frac{1}{|B(x,r)|}\int_{B(x,r)}|f(x)-f_{B(x,r)}|dx<\infty.
\end{equation*}
We will use the following notation: if $\vec{b}=(b_1,\cdots,b_m)\in{\rm BMO}^m$, then we denote by the norm $\|\vec{b}\|_{{\rm BMO}^m}=\sup_{i=1,\cdots,m}\|{b_i}\|_{{\rm BMO}}$.

Similar to the boundedness of $M_m$, for $\vec{b}\in {\rm BMO}^{m}$, one can get the boundedness of $M^{\vec{b}}_{m,i}$ from $L^{p_1}(\bR^n)\times\cdots\times L^{p_m}(\bR^n)$ to $L^{p}(\bR^n)$ for all $p_i>1,~i=1,\cdots,m$, and $1/p=1/p_1+\cdots+1/p_m$ by using the boundedness of $M^{b_i}$ on $L^{p_i}(\bR^n)$ (see [\cite{grafakos2008classical}]) and H{\"o}lder's inequaly. Therefore, $M^{\vec{b}}_{m}$ is also bounded from $L^{p_1}(\bR^n)\times\cdots\times L^{p_m}(\bR^n)$ to $L^{p}(\bR^n)$ under the same conditions.

Now we review the definition of the multilinear commutator of $K_m$, see, for instance, [\cite{lerner2009new}]. 

Let $\vec{b}\in L^1_{\rm loc}(\bR^n)\times\cdots\times L^1_{\rm loc}(\bR^n)$ and $\vec{f}\in L^{1}_{\rm loc}(\bR^n)\times\cdots\times L^{1}_{\rm loc}(\bR^n)$. The $m$-linear commutator $K^{\vec{b}}_{m}$ is defined by
\begin{equation*}
K^{\vec{b}}_{m}(\vec{f})=\sum_{i=1}^{m}K^{\vec{b}}_{m,i}(\vec{f}),
\end{equation*}
where
$$K^{\vec{b}}_{m,i} (\vec{f})=b_iK_m(f_1,\cdots,f_i,\cdots,f_m)-K_m(f_1,\cdots,b_if_i,\cdots,f_m).$$

The operator $K^{\vec{b}}_{m}$ was proved to be bounded from $L^{p_1}(\bR^n)\times\cdots\times L^{p_m}(\bR^n)$ to $L^{p}(\bR^n)$ for $p_i>1~(i=1,\cdots,m)$ and $1/p=1/p_1+\cdots+1/p_m$ by Lerner et al. [\cite{lerner2009new}].

The multi-sublinear maximal operator and multilinear Calder{\'o}n-Zygmund 
operators play a key role in the multilinear harmonic analysis, see [\cite{grafakos2002multilinear,lin2006multilinear}]. To unify the multi-sublinear maximal operator and the multilinear Calder{\'o}n-Zygmund operators, Lin and Lu [\cite{lin2006multilinear}] introduced a class of multi-sublinear operators $T_m$, which satisfies
that for any $\vec{f}\in L^{1}(\bR^n)\times\cdots\times L^{1}(\bR^n)$ with compact support and $x\notin\cap_{j=1}^m\mathrm{supp}f_j$,
\begin{equation}\label{D-multisub}
|T_m(\vec{f})(x)|\leq C\int_{(\bR^n)^m}\frac{|f_1(y_1)\cdots f_m(y_m)|}{|(x-y_1,\cdots,x-y_m)|^{mn}}dy_1\cdots dy_m,
\end{equation}
where $C$ is independent of $\vec{f}$ and $x$. 

Similarly, for $i\in\{1,\cdots,m\}$, $b\in L^1_{\rm loc}(\bR^n)$, we define a class of multi-sublinear commutators $T^b_{m,i}$, which satisfies
that for any $\vec{f}\in L^{1}(\bR^n)\times\cdots\times L^{1}(\bR^n)$ with compact support and $x\notin\cap_{j=1}^m\mathrm{supp}f_j$,
\begin{equation}\label{D-multisub-commu}
|T^b_{m,i}(\vec{f})(x)|\leq C\int_{(\bR^n)^m}\frac{|b(x)-b(y_i)|}{|(x-y_1,\cdots,x-y_m)|^{mn}}|f_1(y_1)\cdots f_m(y_m)|dy_1\cdots dy_m,
\end{equation}
where $C$ is independent of $\vec{f}$ and $x$.

Obviously, the multi-sublinear maximal operator and multilinear Calder{\'o}n-Zygmund 
operators satisfy the size condition (\ref{D-multisub}). Similarly, the commutators $M^{\vec{b}}_{m,i}$ and $K^{\vec{b}}_{m,i}$
satisfy the size condition (\ref{D-multisub-commu}).

The operator $T_m$ satisfying (\ref{D-multisub}) has been proved to be bounded from product Morrey spaces to  Morrey space [\cite{lin2006multilinear}], and bounded from product generalized Morrey spaces to  generalized Morrey space [\cite{2012SOME}] under some mild assumptions. In this paper, we will further prove the boundedness of the multi-sublinear operator $T_m$ satisfying 
condition (\ref{D-multisub}) generated by multilinear Calder{\'o}n-Zygmund operators from product generalized mixed Morrey spaces $M^{\varphi_1}_{\vec{q_1}}(\bR^n)\times\cdots\times M^{\varphi_m}_{\vec{q_m}}(\bR^n)$ to the generalized mixed Morrey space
$M^{\varphi}_{\vec{q}}(\bR^n)$. Moreover, we also establish the boundedness of the multi-sublinear commutator $T^b_{m,j}$ satisfying
condition (\ref{D-multisub-commu}) generated by the commutator of multilinear Calder{\'o}n-Zygmund operators from  $M^{\varphi_1}_{\vec{q_1}}(\bR^n)\times\cdots\times M^{\varphi_m}_{\vec{q_m}}(\bR^n)$ to
$M^{\varphi}_{\vec{q}}(\bR^n)$. Finally, as applications we apply our main theorems to the multi-sublinear maximal operator, the multilinear Calder{\'o}n-Zygmund operators and their commutators.

\section{Definitions and preliminaries}

Throughout the paper, we use the following notations.

The letter $\vec{p}$ denotes $n$-tuples of the numbers in $[0,\infty]$,~($n\geq1$),~$\vec{p}=(p_1,\cdots,p_n)$. By definition, the inequality $0<\vec{p}<\infty$ means $0<p_i<\infty$ for all $i$. For $1\leq\vec{p}\leq\infty$, we denote $\vec{p}'=(p'_1,\cdots,p'_n)$, where $p'_i$ satisfies
$\frac{1}{p_i}+\frac{1}{p'_i}=1$. By $A\lesssim B$, we
mean that $A\leq CB$ for some constant $C>0$, and $A\sim B$ means that $A\lesssim B$ and $B\lesssim A$.

Let $\mathcal{M}(\bR^n)$ be the class of all Lebesgue measurable functions.
By $L^{\vec{p}}_{{\rm loc}}(\bR^n)$, we mean the collection of all functions $f$ such that $f\chi_E\in L^{\vec{p}}(\bR^n)$ for all bounded and measurable sets $E\subseteq \bR^n$.  The letter $\mathbb{C}$ is the set of the complex numbers and  $\mathbb{N}$ is the set of all non-negative integers.

In order to study the local properties of solutions to the partial differential equations, Morrey [\cite{chiarenza1987morrey}] introduced the classical Morrey space $M^p_q(\bR^n)$, which consists of all 
functions $f\in L^q_{\rm loc}(\bR^n)$ with finite norm
$$\|f\|_{M^p_q}=\sup_{x\in\bR^n,r>0}|B(x,r)|^{\frac{1}{p}-{\frac{1}{q}}}\|f\|_{L^q(B(x,r))},$$
where $1\leq q\leq p\leq\infty$.
Note that $M^p_q(\bR^n)=L^p(\bR^n)$ when $p=q$, and $M^p_q(\bR^n)=L^{\infty}(\bR^n)$ when $p=\infty$. If $q>p$, then $M^p_q(\bR^n)=\Theta$, where $\Theta$ is the set of all functions equivalent to 0 on $\bR^n$. One can see [\cite{adams1975note,chiarenza1987morrey,peetre1969theory}] 
and some related papers for the boundedness of some classical operators on $M^p_q(\bR^n)$.

Recently, Guliyev et al. [\cite{burenkov2009necessary,guliyev2021regularity,guliyev2011boundedness,guliyev2013global}] extended $M^p_q(\bR^n)$ to the generalized Morrey spaces $M^\varphi_{q}(\bR^n)$ and studied the regularity of solutions of elliptic equations in divergence form in the generalized Morrey spaces.  

Let ~$1\leq q<\infty$ and $\varphi(x,r): \bR^n\times (0,\infty)\rightarrow(0,\infty)$ be a Lebesgue measurable function. A function $f\in\mathcal{M}(\bR^n)$ belongs to $M^\varphi_{q}(\bR^n)$ if it satisfies
\begin{eqnarray}
\|f\|_{M^\varphi_{q}}=\sup_{x\in\bR^n, r>0}\varphi(x,r)^{-1}|B(x,r)|^{-\frac{1}{q}}\|f\|_{L^q(B(x,r))}<\infty.
\end{eqnarray}
From the definition, one can recover the classical Morrey space $M^p_q(\bR^n)$ by taking $\varphi(x,r)=|B(x,r)|^{-1/p}$.
The boundedness of some sublinear operators and their commutators on $M^\varphi_{q}(\bR^n)$ was obtained by Guliyev et al. [\cite{guliyev2011boundedness}]. Moreover, the boundedness of some multi-sublinear operators from $M^{\varphi_1}_{q_1}(\bR^n)\times\cdots\times M^{\varphi_m}_{q_m}(\bR^n)$ to 
$M^{\varphi}_{q}(\bR^n)$ was also studied in [\cite{guliyev2015multilinear,ismayilovamulti2020,YU2014Boundedness}].

In 2019, Nogayama [\cite{nogayama2019mixed}] considered a new Morrey space, with the $L^p(\bR^n)$ norm replaced by the mixed Lebesgue norm $L^{\vec{q}}(\bR^n)$, which is call the mixed Morrey space. We point out that there exists another mixed Morrey space, which used the iteration of Morrey norm, introduced by Ragusa and Scapellato [\cite{ragusa2017mixed}]. For the boundedness of various operators on these mixed Morrey spaces of iteration type, see [\cite{anceschi2019operators,scapellato2020modified,scapellato2020riesz}].

To make the definition clear, we first recall the definition of mixed Lebesgue spaces.
Let ~$\vec{p}=(p_1,\cdots,p_n)\in(0,\infty]^n$. Then the mixed Lebesgue norm $\|\cdot\|_{\vec{p}}$ is defined by
\begin{eqnarray*}
	\|f\|_{\vec{p}}
	= \l(\int_{\bR}\cdots \l(\int_{\bR}\l(\int_{\bR}|f(x_1,x_2,\cdots,x_n)|^{p_1}dx_1\r)^{\frac{p_2}{p_1}}dx_2\r)^{\frac{p_3}{p_2}}\cdots dx_n\r)^{\frac{1}{p_n}}
\end{eqnarray*}
where $f: \bR^n \rightarrow \mathbb{C}$ is a measurable function. If $p_j=\infty$ for some $j=1,\cdots,n$, then we have to make appropriate modifications. We define the mixed Lebesgue space $L^{\vec{p}}(\bR^n)$
to be the set of all $f\in \mathcal{M}(\bR^n)$  with $\|f\|_{\vec{p}}<\infty$.
We refer the reader to [\cite{1961The}] for more details of mixed Lebesgue spaces.

Let $1\leq \vec{q}<\infty, 1\leq p<\infty$ and $n/p\leq \sum_{i=1}^n1/{q_i}$. A function $f\in\mathcal{M}(\bR^n)$ belongs to the mixed Morrey space $M^p_{\vec{q}}(\bR^n)$ if 
$$\|f\|_{M^p_{\vec{q}}}=\sup_{x\in\bR^n,r>0}|B(x,r)|^{\frac{1}{p}-{\frac{1}{n}\l(\sum_{i=1}^n\frac{1}{q_i}\r)}}\|f\chi_{B(x,r)}\|_{\vec{q}}<\infty.$$
Obviously, we return to the classical Morrey space $M^p_{q}(\bR^n)$ when $\vec{q}=q$. We point out that in [\cite{Nogayama2019Boundedness,nogayama2019mixed}], the author used the cubes to define the mixed Morrey spaces. One can verify the equivalence between the two definitions without any difficulty.

The mixed Morrey space is also an appropriate substitution of the Lebesgue space to study the mapping properties of some operators in harmonic analysis. In [\cite{Nogayama2019Boundedness,nogayama2019mixed}], the Hardy-Littlewood maximal operator $M$, the singular integral operators $K$, the fractional interal operator $I_\alpha$ and its commutator $I^b_\alpha$ were proved to be bounded in $M^p_{\vec{q}}(\bR^n)$. For the atom decomposition and the Olsen inequality for the mixed Morrey spaces, we refer the readers to [\cite{nogayama2020atomic}]. 

In order to unify the two types of Morrey spaces, we introduced generalized mixed Morrey spaces in [\cite{wei2021boundedness}], see also [\cite{zhang2021boundedness}]. Now we give the precise definition of  generalized mixed Morrey spaces.
\begin{definition}\label{GMM}
	Let $1\leq \vec{q}<\infty$, and $\varphi(x,r): \bR^n\times (0,\infty)\rightarrow(0,\infty)$ be a Lebesgue measurable function. A function $f\in\mathcal{M}(\bR^n)$ belongs to the mixed Morrey space $M^\varphi_{\vec{q}}(\bR^n)$ if 
	$$\|f\|_{M^\varphi_{\vec{q}}}=\sup_{x\in\bR^n,r>0}\varphi(x,r)^{-1}\|\chi_{B(x,r)}\|_{L^{\vec{q}}}^{-1}\|f\chi_{B(x,r)}\|_{\vec{q}}<\infty.$$	
\end{definition}	
Clearly, generalized mixed Morrey spaces contain generalized Morrey spaces and mixed Morrey spaces as special cases. In fact, if $\vec{q}=q$, then $M^\varphi_{\vec{q}}(\bR^n)=M^\varphi_{q}(\bR^n)$, and $M^\varphi_{\vec{q}}(\bR^n)=M^p_{\vec{q}}(\bR^n)$ when $\varphi(x,r)=|B(x,r)|^{-1/p}$.

A basic result is that the generalized mixed Morrey space $M^\varphi_{\vec{q}}(\bR^n)$ is a Banach space.
\begin{prop}
	Let $1\leq \vec{q}<\infty$, and $\varphi(x,r): \bR^n\times (0,\infty)\rightarrow(0,\infty)$ be a Lebesgue measurable function. Then $M^\varphi_{\vec{q}}(\bR^n)$ is a Banach space.	
\end{prop}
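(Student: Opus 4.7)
The plan is to verify the norm axioms and then establish completeness by the standard scheme used for generalized Morrey spaces, adapted to the mixed Lebesgue norm.

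First I would check that $\|\cdot\|_{M^\varphi_{\vec{q}}}$ is indeed a norm. Absolute homogeneity is immediate from the definition; non-degeneracy follows because $\|f\|_{M^\varphi_{\vec{q}}}=0$ forces $\|f\chi_{B(x,r)}\|_{\vec{q}}=0$ for every ball, hence $f=0$ almost everywhere; and the triangle inequality reduces, after fixing a ball and taking the supremum, to Minkowski's inequality for the mixed Lebesgue norm $\|\cdot\|_{\vec{q}}$, which is classical (see \cite{1961The}).

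For completeness, let $\{f_k\}$ be a Cauchy sequence in $M^\varphi_{\vec{q}}(\bR^n)$. For any fixed ball $B=B(x,r)$, the definition gives
\[
\|(f_k-f_j)\chi_B\|_{\vec{q}}\leq \varphi(x,r)\,\|\chi_B\|_{\vec{q}}\,\|f_k-f_j\|_{M^\varphi_{\vec{q}}},
\]
so $\{f_k\chi_B\}$ is Cauchy in the mixed Lebesgue space $L^{\vec{q}}(\bR^n)$, hence converges to some $f^B\in L^{\vec{q}}(\bR^n)$ supported in $\overline{B}$. Consistency of these local limits on nested balls lets me glue them into a single measurable function $f$ on $\bR^n$ with $f_k\chi_B\to f\chi_B$ in $L^{\vec{q}}$ for every ball, and, after passing to a subsequence, almost everywhere as well.

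It remains to show $f_k\to f$ in the $M^\varphi_{\vec{q}}$ norm. Given $\varepsilon>0$, pick $N$ with $\|f_k-f_j\|_{M^\varphi_{\vec{q}}}<\varepsilon$ for $k,j\geq N$. For any ball $B(x,r)$, Fatou's lemma applied iteratively in the coordinates defining $\|\cdot\|_{\vec{q}}$ (which is the only mildly delicate step, since one must invoke Fatou one variable at a time in the mixed norm) yields
\[
\|(f_k-f)\chi_{B(x,r)}\|_{\vec{q}}\leq \liminf_{j\to\infty}\|(f_k-f_j)\chi_{B(x,r)}\|_{\vec{q}}.
\]
Dividing by $\varphi(x,r)\|\chi_{B(x,r)}\|_{\vec{q}}$ and taking the supremum over $x\in\bR^n$ and $r>0$ gives $\|f_k-f\|_{M^\varphi_{\vec{q}}}\leq \varepsilon$ for $k\geq N$. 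In particular $f\in M^\varphi_{\vec{q}}(\bR^n)$ by the triangle inequality, and $f_k\to f$ in norm, completing the proof. The only genuine obstacle is the mixed-norm Fatou step, which I would handle by successive application of the one-dimensional Fatou lemma to the iterated integrals in the definition of $\|\cdot\|_{\vec{q}}$.
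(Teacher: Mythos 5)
Your proof is correct, but it follows a different route from the paper's. You argue via local completeness: for each fixed ball the definition gives that $\{f_k\chi_B\}$ is Cauchy in $L^{\vec{q}}(\bR^n)$, you invoke the completeness of the mixed Lebesgue space (Benedek--Panzone) to get local limits, glue them by consistency, and then recover norm convergence of the \emph{whole} sequence through the Fatou property of the mixed norm (applied one coordinate at a time), finally getting $f\in M^\varphi_{\vec{q}}(\bR^n)$ by the triangle inequality. The paper instead never uses completeness of $L^{\vec{q}}$ explicitly: it extracts a subsequence $\{f_{n_k}\}$ with $\|f_{n_{k+1}}-f_{n_k}\|_{M^\varphi_{\vec{q}}}<2^{-k}$, defines the candidate limit as the telescoping series $f=f_{n_1}+\sum_k(f_{n_{k+1}}-f_{n_k})$ together with the majorant $h=|f_{n_1}|+\sum_k|f_{n_{k+1}}-f_{n_k}|$, shows $\|h\|_{M^\varphi_{\vec{q}}}<\infty$ by Minkowski's inequality, deduces $f\in M^\varphi_{\vec{q}}(\bR^n)$ from $|f|\leq h$, and estimates $\|f-f_{n_N}\|_{M^\varphi_{\vec{q}}}$ by the geometric tail, concluding for the full Cauchy sequence. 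Your approach buys convergence of the full sequence directly (no subsequence extraction) and makes the measure-theoretic input explicit, namely completeness of $L^{\vec{q}}$ and the coordinatewise Fatou argument; in fact, since you already have $f_j\chi_B\to f\chi_B$ in $L^{\vec{q}}$, you could replace the Fatou step by plain continuity of the norm, $\|(f_k-f_j)\chi_B\|_{\vec{q}}\to\|(f_k-f)\chi_B\|_{\vec{q}}$ as $j\to\infty$. The paper's series argument buys a more self-contained construction in which the candidate limit and its membership in the space come simultaneously from the dominating function $h$, at the cost of leaving the monotone-convergence/Fatou step for the mixed norm (needed to pass from $\sup_N\|S_Nh\|$ to $\|h\|$) implicit.
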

\begin{proof}
	Let $f_k$ be a Cauchy sequence in $M^\varphi_{\vec{q}}(\bR^n)$. Then for each $\epsilon>0$, there exists some integer $N(\epsilon)>0$, such that for all $i,j>N(\epsilon)$, we have $\|f_i-f_j\|_{M^\varphi_{\vec{q}}}<\epsilon$. Let $n_k=N(\frac{1}{2^k})$. For the above particular choice of $\epsilon$, there is a subsequence $\{f_{n_k}\}$, such that $\|f_{n_{k+1}}-f_{n_k}\|_{M^\varphi_{\vec{q}}}<\frac{1}{2^k}$.
	
	We set 
	$$f(x)=f_{n_1}(x)+\sum_{k=1}^\infty(f_{n_{k+1}}(x)-f_{n_k}(x)),\quad x\in\bR^n,$$
	and 
	$$h(x)=|f_{n_1}(x)|+\sum_{k=1}^\infty|f_{n_{k+1}}(x)-f_{n_k}(x)|,\quad x\in\bR^n.$$
	Define 
	$S_Nf=f_{n_1}+\sum_{k=1}^{N-1}(f_{n_{k+1}}-f_{n_k})=f_{n_N}$
	and 
	$S_Nh=|f_{n_1}|+\sum_{k=1}^{N-1}|f_{n_{k+1}}-f_{n_k}|.$ By using Minkowski's inequality, we obtain
	\begin{eqnarray*}
		\|S_Nh\|_{M^\varphi_{\vec{q}}}&=&\sup_{x\in\bR^n,r>0}\varphi(x,r)^{-1}\|\chi_{B(x,r)}\|_{L^{\vec{q}}}^{-1}\|h\chi_{B(x,r)}\|_{\vec{q}}\\
		&\leqslant&\sup_{x\in\bR^n,r>0}\varphi(x,r)^{-1}\|\chi_{B(x,r)}\|_{L^{\vec{q}}}^{-1}\|f_{n_1}\chi_{B(x,r)}\|_{\vec{q}}\\
		&+&\sum_{k=1}^{N-1}\sup_{x\in\bR^n,r>0}\varphi(x,r)^{-1}\|\chi_{B(x,r)}\|_{L^{\vec{q}}}^{-1}\|(f_{n_{k+1}}-f_{n_k})\chi_{B(x,r)}\|_{\vec{q}}\\
		&\leqslant&\|f_{n_1}\|_{M^\varphi_{\vec{q}}}+\sum_{k=1}^{N-1}\frac{1}{2^k}\leqslant \|f_{n_1}\|_{M^\varphi_{\vec{q}}}+1.
	\end{eqnarray*}
	As a consequence, $\|h\|_{M^\varphi_{\vec{q}}}<\infty$. Since $|f|\leqslant h$, we have $\|f\|_{M^\varphi_{\vec{q}}}\leqslant\|h\|_{M^\varphi_{\vec{q}}}<\infty$. Therefore, $f\in M^\varphi_{\vec{q}}(\bR^n)$.
	
	In view of
	\begin{eqnarray*}
		\lim_{N\rightarrow\infty}\|f-f_{n_N}\|_{M^\varphi_{\vec{q}}}
		&\leqslant&\lim_{N\rightarrow\infty}\sum_{k=N}^\infty\|f_{n_{k+1}}-f_{n_k}\|_{M^\varphi_{\vec{q}}}\\
		&\leqslant&\lim_{N\rightarrow\infty}\sum_{k=N}^\infty\frac{1}{2^k}
		=\lim_{N\rightarrow\infty}\frac{1}{2^{N-1}}=0,
	\end{eqnarray*}
	we deduce that the sequence $\{f_{n_k}\}$ converges to $f$ in $M^\varphi_{\vec{q}}(\bR^n)$. Thus, the Cauchy sequence $\{f_{k}\}$ also converges to $f$ in $M^\varphi_{\vec{q}}(\bR^n)$. We are done.
\end{proof}

In [\cite{wei2021boundedness}], the author obtained the boundedness of some sublinear operators on $M^\varphi_{\vec{q}}(\bR^n)$. In this paper, we will further extend the results in [\cite{wei2021boundedness}] to the multi-sublinear situation. 

\section{Boundedness of $T_m$ from 
	$M^{\varphi_1}_{\vec{q_1}}(\bR^n)\times\cdots\times M^{\varphi_m}_{\vec{q_m}}(\bR^n)$ to
	$M^{\varphi}_{\vec{q}}(\bR^n)$}
In this section, we investigate the boundedness of $T_m$ satisfying the size condition (\ref{D-multisub}) from  product generalized mixed Morrey spaces
$M^{\varphi_1}_{\vec{q_1}}(\bR^n)\times\cdots\times M^{\varphi_m}_{\vec{q_m}}(\bR^n)$ to the
generalized mixed Morrey space $M^\varphi_{\vec{q}}(\bR^n)$.

We first prove the Guliyev local estimate (see, for example, [\cite{guliyev2011boundedness,guliyev2013global}] in the
case $m=1$ and [\cite{guliyev2015multilinear,ismayilovamulti2020,YU2014Boundedness}] in the case $m>1$), which gives us an explicit estimate for the $L^{\vec{q}}(\bR^n)$ norm of  $T_m$ on a given ball $B(x_0,r)$. Here and in what follows, we use the notation $\vec{q}=(q_1,\cdots,q_n)$ and
$\vec{q_i}=(q_{i1},\cdots,q_{in})$ for all $i=1,\cdots,m$.
\begin{lemma}\label{L-T1}
	Let $m\geq2$, $1<\vec{q_i}<\infty$ for all $i=1,\cdots,m$ and $1/\vec{q}=1/\vec{q_1}+\cdots+1/\vec{q_m}$. If $T_m$ is a multi-sublinear operator satisfying condition (\ref{D-multisub}), and bounded from  $L^{\vec{q_1}}(\bR^n)\times\cdots\times L^{\vec{q_m}}(\bR^n)$ to $L^{\vec{q}}(\bR^n)$, then the inequality 
	\begin{equation}\label{E-ball-1}
	\|T_m(\vec{f})\|_{L^{\vec{q}}(B(x_0,r))}\lesssim r^{\sum_{i=1}^n\frac{1}{q_i}}\prod_{i=1}^m\int_{2r}^\infty t^{-1-\sum_{j=1}^n\frac{1}{q_{ij}}}
	\|f_i\|_{L^{\vec{q_i}}(B(x_0,t))}dt
	\end{equation}
	holds for any ball $B(x_0,r)$ and $\vec{f}\in L^{\vec{q_1}}_{\rm loc}(\bR^n)\times\cdots\times L^{\vec{q_m}}_{\rm loc}(\bR^n)$.
\end{lemma}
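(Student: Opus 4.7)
The plan is to adapt the classical Guliyev localization argument to the multi-sublinear, mixed-norm setting. I would fix $B=B(x_0,r)$, split each argument as $f_i=f_i^0+f_i^\infty$ with $f_i^0=f_i\chi_{2B}$, and use sublinearity in each slot to obtain
\begin{equation*}
|T_m(\vec{f})(x)|\leq\sum_{\alpha\in\{0,\infty\}^m}\l|T_m(f_1^{\alpha_1},\ldots,f_m^{\alpha_m})(x)\r|.
\end{equation*}
The all-zero term and the remaining $2^m-1$ cross terms will be treated separately, with the first handled via the assumed strong-type estimate and the second via the pointwise size condition (\ref{D-multisub}).

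For the all-zero term the assumed $L^{\vec{q_1}}\times\cdots\times L^{\vec{q_m}}\to L^{\vec{q}}$ boundedness yields $\|T_m(f_1^0,\ldots,f_m^0)\|_{L^{\vec{q}}(B)}\lesssim\prod_i\|f_i\|_{L^{\vec{q_i}}(2B)}$. Since $\|f_i\|_{L^{\vec{q_i}}(2B)}\leq\|f_i\|_{L^{\vec{q_i}}(B(x_0,t))}$ for every $t\geq 2r$ and $\int_{2r}^\infty t^{-1-\sum_j 1/q_{ij}}\,dt\sim r^{-\sum_j 1/q_{ij}}$, each local norm is controlled by $r^{\sum_j 1/q_{ij}}$ times the desired Hardy-type integral. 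Multiplying over $i$ and using the identity $\sum_{i=1}^m\sum_{j=1}^n 1/q_{ij}=\sum_{j=1}^n 1/q_j$ produces precisely the prefactor $r^{\sum_j 1/q_j}$ demanded by (\ref{E-ball-1}).

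For each cross term ($\alpha\neq(0,\ldots,0)$) at least one factor has support disjoint from $B$, so (\ref{D-multisub}) applies for $x\in B$. Summing over $\alpha\neq 0$ collapses the region of integration to $(\bR^n)^m\setminus(2B)^m$, which I would decompose dyadically as $\bigcup_{l\geq 0}D_l$ with $D_l=\{\vec y:\max_i|y_i-x_0|\sim 2^{l+1}r\}\subset B(x_0,2^{l+2}r)^m$. Since $|(x-y_1,\ldots,x-y_m)|\geq\max_i|x-y_i|\gtrsim 2^l r$ on $D_l$ and Hölder's inequality in the mixed norm gives $\int_{B(x_0,2^{l+2}r)}|f_i|\,dy_i\lesssim(2^lr)^{n-\sum_j 1/q_{ij}}\|f_i\|_{L^{\vec{q_i}}(B(x_0,2^{l+2}r))}$, the usual exponent cancellation reduces the $D_l$-contribution to
\begin{equation*}
(2^lr)^{-\sum_j 1/q_j}\prod_{i=1}^m\|f_i\|_{L^{\vec{q_i}}(B(x_0,2^{l+2}r))}=\prod_{i=1}^m(2^lr)^{-\sum_j 1/q_{ij}}\|f_i\|_{L^{\vec{q_i}}(B(x_0,2^{l+2}r))}.
\end{equation*}

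To reconstruct a product of integrals on the right-hand side of (\ref{E-ball-1}), I would apply the elementary inequality $\sum_l\prod_i a_{i,l}\leq\prod_i\sum_l a_{i,l}$ (valid for nonnegative sequences) and compare each resulting dyadic sum with the Hardy-type integral $\int_{2r}^\infty t^{-1-\sum_j 1/q_{ij}}\|f_i\|_{L^{\vec{q_i}}(B(x_0,t))}\,dt$ in the standard way. The bound obtained is independent of $x\in B$, so taking $L^{\vec{q}}(B)$-norm contributes only $\|\chi_B\|_{L^{\vec{q}}}\sim r^{\sum_j 1/q_j}$, matching the prefactor from the all-zero term. The main technical obstacle is precisely this last step: the naive dyadic estimate for the cross terms produces only a single diagonal sum, and it is the swap of sum and product that upgrades it to the full \emph{product} of integrals required by the conclusion (\ref{E-ball-1}); everything else is routine bookkeeping with Hölder and the dyadic-to-integral comparison.
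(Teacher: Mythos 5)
Your proposal is correct, and its treatment of the ``far'' terms differs genuinely from the paper's. You agree with the paper on the decomposition $f_i=f_i^0+f_i^\infty$ and on handling the all-zero term via the assumed $L^{\vec{q_1}}\times\cdots\times L^{\vec{q_m}}\to L^{\vec{q}}$ bound plus the trivial identity $\int_{2r}^\infty t^{-1-\sum_j 1/q_{ij}}dt\sim r^{-\sum_j 1/q_{ij}}$ (the paper's inequality (\ref{E-0})). For the remaining $2^m-1$ terms, however, the paper runs a case analysis: in the all-$\infty$ case it factors the kernel as $\prod_i|x_0-y_i|^{-n}$, converts each one-dimensional factor to a Hardy-type integral by Fubini ($|x_0-y_i|^{-n}\sim\int_{|x_0-y_i|}^\infty t^{-n-1}dt$, giving (\ref{E-a})), and in the mixed cases it extracts the factor $r^{-n(m-2)}$ from $(|x-y_1|+|x-y_2|)^{mn}$ to pay for the local $L^1$ averages of the $f_i^0$. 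You instead collapse all cross terms into one integral over $(\bR^n)^m\setminus(2B)^m$, decompose dyadically in $\max_i|y_i-x_0|$, apply H\"older on each ball $B(x_0,2^{l+2}r)$, and then pass from the resulting diagonal sum $\sum_l\prod_i a_{i,l}$ to $\prod_i\sum_l a_{i,l}$ before comparing each dyadic sum with the integral; this last inequality is elementary and goes in the right (lossy) direction since the target (\ref{E-ball-1}) is the full product, and the comparison $\sum_{l\geq 0}(2^lr)^{-\sum_j 1/q_{ij}}\|f_i\|_{L^{\vec{q_i}}(B(x_0,2^{l+2}r))}\lesssim\int_{2r}^\infty t^{-1-\sum_j 1/q_{ij}}\|f_i\|_{L^{\vec{q_i}}(B(x_0,t))}dt$ holds by monotonicity of $t\mapsto\|f_i\|_{L^{\vec{q_i}}(B(x_0,t))}$ on the disjoint intervals $[2^{l+2}r,2^{l+3}r]\subset[2r,\infty)$. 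Your route buys a uniform treatment of all cross terms (no separate mixed case, no $r^{-n(m-2)}$ bookkeeping) at the cost of the extra lossy diagonal-to-product step and dyadic bookkeeping, whereas the paper's Fubini identities keep each factor exact; also note that the step you flag as the main obstacle is in fact immediate for nonnegative terms, since $\prod_i\sum_l a_{i,l}$ expands into a sum over all multi-indices $(l_1,\ldots,l_m)$ that contains the diagonal. Both arguments yield (\ref{E-ball-1}) with the same prefactor $\|\chi_B\|_{L^{\vec{q}}}\sim r^{\sum_j 1/q_j}$.
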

\begin{proof}
	For any ball $B=B(x_0,r)$, let $2B=B(x_0,2r)$ be the ball centered at $x_0$ with radius $2r$. We represent $f_i$ as $f_i=f^0_i+f^{\infty}_i$, where
	$$f^0_i=f_i\chi_{2B},~~f^{\infty}_i=f_i\chi_{(2B)^c},~~i=1,\cdots,m.$$
	Since $T_m$ is a multi-sublinear operator, we can split $T_m\vec{f}$ as 
	$$|T_m(\vec{f})(x)|\lesssim|T_m(f^0_1,\cdots,f^0_m)(x)|+\l|\sum_{\beta_1,\cdots,\beta_m}
	T_m(f^{\beta_1}_1,\cdots,f^{\beta_m}_m)(x)\r|,$$
	where $\beta_1,\cdots,\beta_m\in\{0,\infty\}$ and each term of $\sum$ contains at least $\beta_i\neq0$. Then
	\begin{eqnarray*}
		\|T_m(\vec{f})\|_{L^{\vec{q}}(B)}
		&\lesssim&\|T_m(f^0_1,\cdots,f^0_m)\|_{L^{\vec{q}}(B)}+\l\|\sum_{\beta_1,\cdots,\beta_m}
		T_m(f^{\beta_1}_1,\cdots,f^{\beta_m}_m)\r\|_{L^{\vec{q}}(B)}\\
		&:=& I+II.
	\end{eqnarray*}
	Denote by $\vec{f^0}=(f^0_1,\cdots,f^0_m)$. For the term $I$, by using the boundedness of $T_m$ from product Lebesgue spaces $L^{\vec{q_1}}(\bR^n)\times\cdots\times L^{\vec{q_m}}(\bR^n)$ to $L^{\vec{q}}(\bR^n)$ with $1/\vec{q}=1/\vec{q_1}+\cdots+1/\vec{q_m}$ and $1<\vec{q_i}<\infty$, we have
	\begin{eqnarray*}
		I&=&\|T_m(\vec{f^0})\|_{L^{\vec{q}}(B)}\leq\|T_m(\vec{f^0})\|_{L^{\vec{q}}(\bR^n)}\\
		&\lesssim&\prod_{i=1}^m\|{f^0_i}\|_{L^{\vec{q_i}}(\bR^n)}\leq\prod_{i=1}^m\|{f_i}\|_{L^{\vec{q_i}}(2B)}.
	\end{eqnarray*}
	On the other hand, for any $i=1,\cdots,m$,
	\begin{eqnarray}\label{E-0}
	\|f_i\|_{L^{\vec{q_i}}(2B)}&\sim& r^{\sum_{j=1}^n\frac{1}{q_{ij}}}
	\|f_i\|_{L^{\vec{q_i}}(2B)}\int_{2r}^\infty\frac{dt}{t^{1+\sum_{j=1}^n\frac{1}{q_{ij}}}}\nonumber\\
	&\lesssim& r^{\sum_{j=1}^n\frac{1}{q_{ij}}}\int_{2r}^\infty\|f_i\|_{L^{\vec{q_i}}(B(x_0,t))}\frac{dt}{t^{1+\sum_{j=1}^n\frac{1}{q_{ij}}}}.
	\end{eqnarray}
	Therefore,
	$$I\lesssim r^{\sum_{i=1}^n\frac{1}{q_i}}\prod_{i=1}^m\int_{2r}^\infty t^{-1-\sum_{j=1}^n\frac{1}{q_{ij}}}
	\|f_i\|_{L^{\vec{q_i}}(B(x_0,t))}dt.$$
	Now we turn to the estimates of $II$. We first consider the case $\beta_1=\cdots=\beta_m=\infty$.
	
	When $x\in B$, $y_i\in (2B)^c~(i=1,\cdots,m)$, we have $\frac{1}{2}|x_0-y_i|\leq |x-y_i|\leq \frac{3}{2}|x_0-y_i|$. Denote by $\vec{f^{\infty}}=(f^{\infty}_1,\cdots,f^{\infty}_m)$. By virtue of condition (\ref{D-multisub}), there holds
	\begin{eqnarray*}
		|T_m(\vec{f^{\infty}})(x)|&\lesssim&\int_{((2B)^c)^m}\frac{|f_1(y_1)\times\cdots\times f_m(y_m)|}{|(x-y_1,\cdots,x-y_m)|^{mn}}d\vec{y}\\
		&\lesssim&
		\int_{((2B)^c)^m}\frac{|f_1(y_1)\times\cdots\times f_m(y_m)|}{|(x_0-y_1,\cdots,x_0-y_m)|^{mn}}d\vec{y}\\
		&\lesssim&\prod_{i=1}^m\int_{(2B)^c}\frac{|f_i(y_i)|}{|x_0-y_i|^n}dy_i.
	\end{eqnarray*}	
	As a consequence,
	\begin{eqnarray*}
		\|T_m(\vec{f^{\infty}})\|_{L^{\vec{q}}(B)}
		&\lesssim&\|\chi_B\|_{L^{\vec{q}}(\bR^n)}\prod_{i=1}^m\int_{(2B)^c}\frac{|f_i(y_i)|}{|x_0-y_i|^n}dy_i\\
		&\lesssim&r^{\sum_{i=1}^n\frac{1}{q_i}}\times\prod_{i=1}^m\int_{(2B)^c}\frac{|f_i(y_i)|}{|x_0-y_i|^n}dy_i.
	\end{eqnarray*}	
	For any $i=1,\cdots,m$, by Fubini's theorem, we have
	\begin{eqnarray*}
		\int_{(2B)^c}\frac{|f_i(y_i)|}{|x_0-y_i|^n}dy_i&\sim& \int_{(2B)^c}|f_i(y_i)|\int_{|x_0-y_i|}^\infty\frac{dt}{t^{n+1}}dy_i\\
		&\sim& \int_{2r}^\infty\int_{2r\leq |x_0-y_i|<t}|f_i(y_i)|dy_i\frac{dt}{t^{n+1}}\\
		&\lesssim& \int_{2r}^\infty\int_{B(x_0,t)}|f_i(y_i)|dy_i\frac{dt}{t^{n+1}}.
	\end{eqnarray*}
	Applying H{\"o}lder's inequality on mixed Lebesgue spaces (see [\cite{1961The}]), we obtain
	\begin{equation}\label{E-a}
	\int_{(2B)^c}\frac{|f_i(y_i)|}{|x_0-y_i|^n}dy\lesssim\int_{2r}^\infty\|f_i\|_{L^{\vec{q_i}}(B(x_0,t))}\frac{dt}{t^{1+\sum_{j=1}^n\frac{1}{q_{ij}}}}.
	\end{equation}
	Hence, we have
	\begin{eqnarray*}
		\|T_m(\vec{f^{\infty}})\|_{L^{\vec{q}}(B)}
		\lesssim r^{\sum_{i=1}^n\frac{1}{q_i}}\prod_{i=1}^m\int_{2r}^\infty t^{-1-\sum_{j=1}^n\frac{1}{q_{ij}}}
		\|f_i\|_{L^{\vec{q_i}}(B(x_0,t))}dt.
	\end{eqnarray*}	
	Next we consider the case that some $\beta_i=0$ and other $\beta_j=\infty$. To this end we may
	assume that $\beta_1=\beta_2=\infty$ and $\beta_3=\cdots=\beta_m=0$. Observing condition (\ref{D-multisub}) and the fact $|x-y_i|\sim|x_0-y_i|$ for all $x\in B$ and $y_i\in (2B)^c, i=1,2$, we arrive at
	\begin{eqnarray*}
		&&|T_m(f_1^{\infty},f_2^{\infty},f^0_3,\cdots,f^0_m)(x)|\\
		&\lesssim&\int_{(2B)^c\times(2B)^c}\frac{|f_1(y_1)\times f_2(y_2)|}{(|x-y_1|+|x-y_2|)^{mn}}dy_1dy_2\times\prod_{i=3}^m\int_{2B}|f_i(y_i)|dy_i\\
		&\lesssim&r^{-n(m-2)}\int_{(2B)^c}\frac{|f_1(y_1)|}{|x_0-y_1|^n}dy_1\times
		\int_{(2B)^c}\frac{|f_2(y_2)|}{|x_0-y_2|^n}dy_2\times\prod_{i=3}^m\int_{2B}|f_i(y_i)|dy_i.
	\end{eqnarray*}
	By using inequalities (\ref{E-0}), (\ref{E-a}) and H{\"o}lder's inequality on mixed Lebesgue spaces, we get
	\begin{eqnarray*}
		&&\|T_m(f_1^{\infty},f_2^{\infty},f^0_3,\cdots,f^0_m)\|_{L^{\vec{q}}(B)}\\
		&\lesssim&r^{\sum_{i=1}^n\frac{1}{q_i}}\times r^{-n(m-2)}\int_{(2B)^c}\frac{|f_1(y_1)|}{|x_0-y_1|^n}dy_1\times
		\int_{(2B)^c}\frac{|f_2(y_2)|}{|x_0-y_2|^n}dy_2\times\prod_{i=3}^m\int_{2B}|f_i(y_i)|dy_i\\
		&\lesssim&r^{\sum_{i=1}^n\frac{1}{q_i}}\times r^{-n(m-2)} \int_{(2B)^c}\frac{|f_1(y_1)|}{|x_0-y_1|^n}dy_1\times
		\int_{(2B)^c}\frac{|f_2(y_2)|}{|x_0-y_2|^n}dy_2\times\prod_{i=3}^mr^{\sum_{j=1}^n\frac{1}{q'_{ij}}}\|f_i\|_{L^{\vec{q_i}}(2B)}\\
		&\lesssim& r^{\sum_{i=1}^n\frac{1}{q_i}}\prod_{i=1}^m\int_{2r}^\infty t^{-1-\sum_{j=1}^n\frac{1}{q_{ij}}}
		\|f_i\|_{L^{\vec{q_i}}(B(x_0,t))}dt.
	\end{eqnarray*}
	From the estimates of $I$ and $II$, we obtain the desired result (\ref{E-ball-1}).
\end{proof}
Now we give the boundedness of the multi-sublinear operators generated by multilinear Calder{\'o}n-Zygmund operators on product generalized mixed Morrey spaces.
\begin{theorem}\label{T-T1}
	Let $m\geq2$, $1<\vec{q_i}<\infty$ for all $i=1,\cdots,m$, $1/\vec{q}=1/\vec{q_1}+\cdots+1/\vec{q_m}$, and $\varphi,\varphi_i~(i=1,\cdots,m): \bR^n\times (0,\infty)\rightarrow(0,\infty)$ be Lebesgue measurable functions satisfying
	\begin{equation}\label{condition-1}
		\prod_{i=1}^m\int_r^\infty\frac{\varphi_i(x,t)}{t}dt\lesssim \varphi(x,r).
		\end{equation} 
	If $T_m$ is a multi-sublinear operator satisfying condition (\ref{D-multisub}), and bounded from  $L^{\vec{q_1}}(\bR^n)\times\cdots\times L^{\vec{q_m}}(\bR^n)$ to $L^{\vec{q}}(\bR^n)$, then $T_m$ is also bounded from product spaces $M^{\varphi_1}_{\vec{q_1}}(\bR^n)\times\cdots\times M^{\varphi_m}_{\vec{q_m}}(\bR^n)$ to
	$M^{\varphi}_{\vec{q}}(\bR^n)$.
\end{theorem}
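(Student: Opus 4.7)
The plan is to reduce the theorem to Lemma \ref{L-T1} and then apply the structural hypothesis (\ref{condition-1}) in a clean way. First I would fix an arbitrary ball $B(x_0,r)$ and recall from Lemma \ref{L-T1} that
\begin{equation*}
\|T_m(\vec{f})\|_{L^{\vec{q}}(B(x_0,r))}\lesssim r^{\sum_{i=1}^n 1/q_i}\prod_{i=1}^m \int_{2r}^\infty t^{-1-\sum_{j=1}^n 1/q_{ij}}\|f_i\|_{L^{\vec{q_i}}(B(x_0,t))}\,dt.
\end{equation*}
Since $1/\vec{q}=1/\vec{q_1}+\cdots+1/\vec{q_m}$, the prefactor $r^{\sum_{i=1}^n 1/q_i}$ is comparable to $\|\chi_{B(x_0,r)}\|_{L^{\vec{q}}}$, which is exactly the normalizing factor in the definition of $M^\varphi_{\vec{q}}$.

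Next I would bound each inner $L^{\vec{q_i}}$-norm by the corresponding generalized mixed Morrey norm. From Definition \ref{GMM} and $\|\chi_{B(x_0,t)}\|_{L^{\vec{q_i}}}\sim t^{\sum_{j=1}^n 1/q_{ij}}$, one has
\begin{equation*}
\|f_i\|_{L^{\vec{q_i}}(B(x_0,t))}\leq \varphi_i(x_0,t)\,\|\chi_{B(x_0,t)}\|_{L^{\vec{q_i}}}\,\|f_i\|_{M^{\varphi_i}_{\vec{q_i}}}\lesssim t^{\sum_{j=1}^n 1/q_{ij}}\,\varphi_i(x_0,t)\,\|f_i\|_{M^{\varphi_i}_{\vec{q_i}}}.
\end{equation*}
Substituting this into the local estimate cancels the weight $t^{-\sum_{j=1}^n 1/q_{ij}}$ and gives
\begin{equation*}
\|T_m(\vec{f})\|_{L^{\vec{q}}(B(x_0,r))}\lesssim r^{\sum_{i=1}^n 1/q_i}\prod_{i=1}^m \|f_i\|_{M^{\varphi_i}_{\vec{q_i}}}\int_{2r}^\infty \frac{\varphi_i(x_0,t)}{t}\,dt.
\end{equation*}

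Finally I would divide both sides by $\varphi(x_0,r)\,\|\chi_{B(x_0,r)}\|_{L^{\vec{q}}}$. The factor $r^{\sum_{i=1}^n 1/q_i}$ absorbs $\|\chi_{B(x_0,r)}\|_{L^{\vec{q}}}$, while hypothesis (\ref{condition-1}) (applied with the integration starting from $2r$, which only makes the left-hand side smaller) yields
\begin{equation*}
\varphi(x_0,r)^{-1}\prod_{i=1}^m\int_{2r}^\infty\frac{\varphi_i(x_0,t)}{t}\,dt\lesssim 1.
\end{equation*}
Taking the supremum over $x_0\in\bR^n$ and $r>0$ then produces the desired bound $\|T_m(\vec{f})\|_{M^\varphi_{\vec{q}}}\lesssim \prod_{i=1}^m\|f_i\|_{M^{\varphi_i}_{\vec{q_i}}}$.

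The only genuinely delicate step is Lemma \ref{L-T1} itself, which has already been proved; given that, the derivation here is a clean substitution argument, and the main bookkeeping task is just verifying that the exponents on $r$ and $t$ match up correctly between the $L^{\vec{q}}$ and $L^{\vec{q_i}}$ normalizations so that condition (\ref{condition-1}) is applied in exactly the right form.
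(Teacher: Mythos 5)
Your proposal is correct and follows essentially the same route as the paper: apply Lemma \ref{L-T1}, bound each local norm $\|f_i\|_{L^{\vec{q_i}}(B(x_0,t))}$ by $\varphi_i(x_0,t)\|\chi_{B(x_0,t)}\|_{L^{\vec{q_i}}}\|f_i\|_{M^{\varphi_i}_{\vec{q_i}}}$, cancel the powers of $t$ and $r$ against the mixed-norm normalizations, and invoke condition (\ref{condition-1}). The paper performs the same substitution (inserting $\varphi_i\varphi_i^{-1}$ inside the integrals) after dividing by the Morrey normalization, so there is no substantive difference.
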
 
\begin{proof}
	By using (\ref{condition-1}) and Lemma \ref{L-T1}, we have
	\begin{eqnarray*}
		\|T_m(\vec{f})\|_{M^{\varphi}_{\vec{q}}}&\lesssim& \sup_{x\in\bR^n,r>0}\varphi(x,r)^{-1}\prod_{i=1}^m\int_{r}^\infty t^{-1-\sum_{j=1}^n\frac{1}{q_{ij}}}
		\|f_i\|_{L^{\vec{q_i}}(B(x,t))}dt\\
		&=& \sup_{x\in\bR^n,r>0}\varphi(x,r)^{-1}\prod_{i=1}^m\int_{r}^\infty \frac{\varphi_i(x,t)}{t}
		\varphi_i(x,t)^{-1}\|\chi_{B(x,t)}\|_{L^{\vec{q_i}}}^{-1}\|f_i\|_{L^{\vec{q_i}}(B(x,t))}dt\\
		&\lesssim& \sup_{x\in\bR^n,r>0}\varphi(x,r)^{-1}\prod_{i=1}^m\int_{r}^\infty \frac{\varphi_i(x,t)}{t}
		dt\times\|f_i\|_{M^{\varphi_k}_{\vec{q_i}}}\lesssim\prod_{i=1}^m\|f_i\|_{M^{\varphi_k}_{\vec{q_i}}}.
	\end{eqnarray*} 
	The proof is complete.
\end{proof}
By taking $\vec{q_i}=q_i$ and $\vec{q}=q$ in Theorem \ref{T-T1}, we recover the results of Guliyev et al. [\cite{guliyev2015multilinear}, Theorem 1.1],
Ismayilova et al. [\cite{ismayilovamulti2020}, Theorem 2.3] and Yu et al. [\cite{YU2014Boundedness}, Theorem 2.1 and Theorem 3.1], which established the boundedness of $T_m$ on product generalized Morrey spaces.

\section{Boundeness of $T^b_{m,i}$ from 
	$M^{\varphi_1}_{\vec{q_1}}(\bR^n)\times\cdots\times M^{\varphi_m}_{\vec{q_m}}(\bR^n)$ to $M^{\varphi}_{\vec{q}}(\bR^n)$ }
The aim of this section is to establish the boundedness of $T^b_{m,i}$ satisfying size condition (\ref{D-multisub-commu}) from product generalized mixed Morrey spaces
$M^{\varphi_1}_{\vec{q_1}}(\bR^n)\times\cdots\times M^{\varphi_m}_{\vec{q_m}}(\bR^n)$ to the
generalized mixed Morrey space $M^\varphi_{\vec{q}}(\bR^n)$, where the symbol function $b$ is always assumed to be in BMO.


As in Section 2, we first prove the Guliyev local estimate for $T^b_{m,i}$.
\begin{lemma}\label{L-T3}
	Let $m\geq2$, $1<\vec{q_k}<\infty$ for $k=1,\cdots,m$ and $1/\vec{q}=1/\vec{q_1}+\cdots+1/\vec{q_m}$. If 
	$b\in {\rm BMO}$, $T^b_{m,i}$ is a multi-sublinear operator satisfying condition (\ref{D-multisub-commu}), and bounded from  $L^{\vec{q_1}}(\bR^n)\times\cdots\times L^{\vec{q_m}}(\bR^n)$ to $L^{\vec{q}}(\bR^n)$, then the inequality 
	\begin{eqnarray}\label{E-ball-2}
	\|T^b_{m,i}(\vec{f})\|_{L^{\vec{q}}(B(x_0,r))}&\lesssim& r^{\sum_{j=1}^n\frac{1}{q_j}}\int_{2r}^\infty \l(1+\ln\frac{t}{r}\r)t^{-1-\sum_{j=1}^n\frac{1}{q_{ij}}}
	\|f_i\|_{L^{\vec{q_i}}(B(x_0,t))}dt \nonumber\\
	&\times&\prod_{k=1,k\neq i}^m\int_{2r}^\infty t^{-1-\sum_{j=1}^n\frac{1}{q_{kj}}}
	\|f_k\|_{L^{\vec{q_k}}(B(x_0,t))}dt
	\end{eqnarray}
	holds for any ball $B(x_0,r)$ and $\vec{f}\in L^{\vec{q_1}}_{\rm loc}(\bR^n)\times\cdots\times L^{\vec{q_m}}_{\rm loc}(\bR^n)$.
\end{lemma}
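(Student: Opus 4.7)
The plan is to mimic the proof of Lemma \ref{L-T1}, decomposing each input $f_k$ as $f_k = f_k^0 + f_k^\infty$ with $f_k^0 = f_k\chi_{2B}$ and $f_k^\infty = f_k\chi_{(2B)^c}$, and splitting
\[
|T^b_{m,i}(\vec{f})(x)|\lesssim |T^b_{m,i}(\vec{f^0})(x)|+\Big|\sum_{\beta_1,\dots,\beta_m} T^b_{m,i}(f_1^{\beta_1},\dots,f_m^{\beta_m})(x)\Big|,
\]
where the sum runs over $(\beta_1,\dots,\beta_m)\in\{0,\infty\}^m$ with at least one $\beta_k=\infty$. I will then take the $L^{\vec q}(B)$ norm of each summand separately.

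For the local piece $\|T^b_{m,i}(\vec{f^0})\|_{L^{\vec q}(B)}$, I will directly invoke the assumed boundedness of $T^b_{m,i}$ from $L^{\vec{q_1}}\times\cdots\times L^{\vec{q_m}}$ to $L^{\vec q}$ and then apply the identity (\ref{E-0}) to each $\|f_k\|_{L^{\vec{q_k}}(2B)}$ to turn the product over $k$ into the product of integrals over $(2r,\infty)$ appearing on the right of (\ref{E-ball-2}); since $(1+\ln(t/r))\geq 1$, the $i$-th integral admits the required logarithmic weight for free.

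The main work is the global piece. Consider first the representative case $\beta_1=\cdots=\beta_m=\infty$. For $x\in B$ and $y_k\in(2B)^c$ we still have $|x-y_k|\sim|x_0-y_k|$, so condition (\ref{D-multisub-commu}) yields
\[
|T^b_{m,i}(\vec{f^\infty})(x)|\lesssim \Big(\prod_{k\neq i}\int_{(2B)^c}\frac{|f_k(y_k)|}{|x_0-y_k|^n}dy_k\Big)\cdot\int_{(2B)^c}\frac{|b(x)-b(y_i)|\,|f_i(y_i)|}{|x_0-y_i|^n}dy_i.
\]
The factors indexed by $k\neq i$ are treated exactly as in Lemma \ref{L-T1} via (\ref{E-a}). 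For the $i$-th factor I write $b(x)-b(y_i)=(b(x)-b_B)+(b_B-b(y_i))$. The $(b(x)-b_B)$ part, after taking $L^{\vec q}(B)$, contributes $\|(b-b_B)\chi_B\|_{L^{\vec q}}\lesssim \|b\|_{\rm BMO}\|\chi_B\|_{L^{\vec q}}$, giving an $r^{\sum 1/q_j}$ factor times the usual integral over $(2r,\infty)$. The $(b_B-b(y_i))$ part is handled by Fubini followed by Hölder on mixed Lebesgue spaces,
\[
\int_{(2B)^c}\frac{|b(y_i)-b_B|\,|f_i(y_i)|}{|x_0-y_i|^n}dy_i\lesssim\int_{2r}^\infty \|(b-b_B)\chi_{B(x_0,t)}\|_{L^{\vec{q_i}'}}\|f_i\|_{L^{\vec{q_i}}(B(x_0,t))}\frac{dt}{t^{n+1}},
\]
where the telescoping estimate $|b_{B(x_0,t)}-b_B|\lesssim(1+\ln(t/r))\|b\|_{\rm BMO}$ combined with a mixed-Lebesgue John--Nirenberg type bound (established in \cite{wei2021boundedness}) gives $\|(b-b_B)\chi_{B(x_0,t)}\|_{L^{\vec{q_i}'}}\lesssim (1+\ln(t/r))\|b\|_{\rm BMO}\|\chi_{B(x_0,t)}\|_{L^{\vec{q_i}'}}$, producing precisely the logarithmic weight in (\ref{E-ball-2}).

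The remaining mixed cases, where some $\beta_k=0$ and others $\beta_k=\infty$, follow by combining these two techniques: factors with $\beta_k=\infty$ and $k\neq i$ are estimated as before, factors with $\beta_k=0$ and $k\neq i$ by Hölder on the ball $2B$ exactly as in Lemma \ref{L-T1}, and the $i$-th factor by whichever of the two arguments above corresponds to $\beta_i$, the commutator/BMO decomposition being the common thread. The principal obstacle is the $i$-th factor in the all-$\infty$ case — justifying the logarithmic gain via the mixed-norm BMO estimate — while the bookkeeping for the $2^m-1$ mixed cases is routine once a representative case is settled.
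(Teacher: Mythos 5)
Your proposal is correct and follows essentially the same route as the paper's proof: the same $f_k^0+f_k^\infty$ decomposition, the same treatment of the local term via the assumed $L^{\vec{q_1}}\times\cdots\times L^{\vec{q_m}}\to L^{\vec{q}}$ boundedness and (\ref{E-0}), and the same factorization of the all-$\infty$ and mixed global terms using (\ref{E-a}) and H{\"o}lder on mixed Lebesgue spaces. The only difference is that you reprove the key logarithmic estimate (\ref{E-21}) inline (splitting $b(x)-b(y_i)=(b(x)-b_B)+(b_B-b(y_i))$, telescoping, and the mixed-norm BMO characterization), whereas the paper cites it from [\cite{wei2021boundedness}], whose proof uses exactly these ingredients.
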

\begin{proof}
	As in Lemma \ref{L-T1}, for any ball $B=B(x_0,r)$, let $2B=B(x_0,2r)$ be the ball centered at $x_0$ with radius $2r$. We still use the representation $f_k=f^0_k+f^{\infty}_k$, where
	$$f^0_k=f_k\chi_{2B},~~f^{\infty}_k=f_k\chi_{(2B)^c},~~k=1,\cdots,m.$$
	Since $T^b_{m,i}$ is a multi-sublinear operator, we can split $T^b_{m,i}\vec{f}$ as 
	$$|T^b_{m,i}(\vec{f})(x)|\lesssim|T^b_{m,i}(f^0_1,\cdots,f^0_m)(x)|+\l|\sum_{\beta_1,\cdots,\beta_m}
	T^b_{m,i}(f^{\beta_1}_1,\cdots,f^{\beta_m}_m)(x)\r|,$$
	where $\beta_1,\cdots,\beta_m\in\{0,\infty\}$ and each term of $\sum$ contains at least $\beta_i\neq0$.
	Then
	\begin{eqnarray*}
		\|T^b_{m,i}(\vec{f})\|_{L^{\vec{q}}(B)}
		&\lesssim&\|T^b_{m,i}(f^0_1,\cdots,f^0_m)\|_{L^{\vec{q}}(B)}+\l\|\sum_{\beta_1,\cdots,\beta_m}
		T^b_{m,i}(f^{\beta_1}_1,\cdots,f^{\beta_m}_m)\r\|_{L^{\vec{q}}(B)}\\
		&:=& I+II.
	\end{eqnarray*}
	Denote by $\vec{f^0}=(f^0_1,\cdots,f^0_m)$. For the term $I$, by using the boundedness of $T^b_{m,i}$, we obtain
	\begin{eqnarray*}
		I&=&\|T^b_{m,i}(\vec{f^0})\|_{L^{\vec{q}}(B)}\leq\|T^b_{m,i}(\vec{f^0})\|_{L^{\vec{q}}(\bR^n)}\\
		&\lesssim&\prod_{k=1}^m\|{f^0_k}\|_{L^{\vec{q_k}}(\bR^n)}\leq\prod_{k=1}^m\|{f_k}\|
		_{L^{\vec{q_k}}(2B)}.
	\end{eqnarray*}
	Therefore, we get from (\ref{E-0}) that
	$$I\lesssim r^{\sum_{j=1}^n\frac{1}{q_j}}\prod_{k=1}^m\int_{2r}^\infty t^{-1-\sum_{j=1}^n\frac{1}{q_{kj}}}
	\|f_k\|_{L^{\vec{q_k}}(B(x_0,t))}dt.$$
	Next we will give the estimates of $II$. We first consider the case $\beta_1=\cdots=\beta_m=\infty$.
	For $x\in B$, $y_k\in (2B)^c~(k=1,\cdots,m)$, we have $\frac{1}{2}|x_0-y_k|\leq |x-y_k|\leq \frac{3}{2}|x_0-y_k|$. Denote by $\vec{f^{\infty}}=(f^{\infty}_1,\cdots,f^{\infty}_m)$. By using condition (\ref{D-multisub-commu}), we obtain
	\begin{eqnarray*}
		|T^b_{m,i}(\vec{f^{\infty}})(x)|&\lesssim&\int_{((2B)^c)^m}\frac{|b(x)-b(y_i)|}{|(x-y_1,\cdots,x-y_m)|^{mn}}
		|f_1(y_1)\times\cdots\times f_m(y_m)|d\vec{y}\nonumber\\
		&\lesssim&
		\int_{((2B)^c)^m}\frac{|b(x)-b(y_i)|}{|(x_0-y_1,\cdots,x_0-y_m)|^{mn}}|f_1(y_1)\times\cdots\times f_m(y_m)|d\vec{y}\nonumber\\
		&\lesssim&\int_{(2B)^c}\frac{|b(x)-b(y_i)|}{|x_0-y_i|^n}f_i(y_i)dy_i\times\prod_{k=1,k\neq i}^m\int_{(2B)^c}\frac{|f_k(y_k)|}{|x_0-y_k|^n}dy_k.
	\end{eqnarray*}	
	By using H{\"o}lder's inequality on mixed Lebesgue spaces, we have 
	\begin{eqnarray}\label{E-zong}
	\|T^b_{m,i}(\vec{f^{\infty}})\|_{L^{\vec{q}}(B)}&\lesssim&\l\|\int_{(2B)^c}\frac{|b(x)-b(y_i)|}{|x_0-y_i|^n}f_i(y_i)dy_i\r\|_{L^{\vec{q_i}}(B)}\nonumber\\
	&\times&\prod_{k=1,k\neq i}^m\|\chi_B\|_{L^{\vec{q_k}}(B)}\int_{(2B)^c}\frac{|f_k(y_k)|}{|x_0-y_k|^n}dy_k.
	\end{eqnarray}
	
By using an equivalent characterization of BMO in terms of mixed norm ([\cite{ho2018mixed}]) and the trival inequality $|f_{B(x,r)}-f_{B(x,t)}|\lesssim \|f\|_{{\rm BMO}}\ln\frac{t}{r}$ for $f\in {\rm BMO}(\bR^n)$ and $0<2r<t$, Wei [\cite{wei2021boundedness}] proved that
	\begin{eqnarray}\label{E-21}
	&&\l\|\int_{(2B)^c}\frac{|b(x)-b(y_i)|}{|x_0-y_i|^n}f_i(y_i)dy_i\r\|_{L^{\vec{q_i}}(B)}\nonumber\\
	&\lesssim& \|b\|_{\rm BMO}r^{\sum_{j=1}^n\frac{1}{q_{ij}}}
	\int_{2r}^\infty \l(1+\ln\frac{t}{r}\r)t^{-1-\sum_{j=1}^n\frac{1}{q_{ij}}}\|f_i\|_{L^{\vec{q_i}}(B(x_0,t))}dt,
	\end{eqnarray}
see [\cite{wei2021boundedness}, Proof of Lemma 4.2] for the details.
	
	Inserting (\ref{E-a}) and (\ref{E-21}) into (\ref{E-zong}), we arrive at
	\begin{eqnarray*}
		\|T^b_{m,i}(\vec{f^{\infty}})\|_{L^{\vec{q}}(B)}&\lesssim&
		\|b\|_{\rm BMO}r^{\sum_{j=1}^n\frac{1}{q_j}}\int_{2r}^\infty \l(1+\ln\frac{t}{r}\r)t^{-1-\sum_{j=1}^n\frac{1}{q_{ij}}}
		\|f_i\|_{L^{\vec{q_i}}(B(x_0,t))}dt \nonumber\\
		&\times&\prod_{k=1,k\neq i}^m\int_{2r}^\infty t^{-1-\sum_{j=1}^n\frac{1}{q_{kj}}}
		\|f_k\|_{L^{\vec{q_k}}(B(x_0,t))}dt.
	\end{eqnarray*}
	Now we consider the case that some $\beta_k=0$ and other $\beta_j=\infty$. Without loss of generality, we assume $\beta_1=\beta_2=\infty$ and $\beta_3=\cdots=\beta_m=0$ and $i=2$. In view of condition (\ref{D-multisub-commu}) and the fact $|x-y_k|\sim|x_0-y_k|$ for all $x\in B$ and $y_k\in (2B)^c, k=1,2$, we obtain
	\begin{eqnarray*}
		&&|T^b_{m,i}(f_1^{\infty},f_2^{\infty},f^0_3,\cdots,f^0_m)(x)|\\
		&\lesssim&\int_{(2B)^c\times(2B)^c}\frac{|b(x)-b(y_2)|}{(|x-y_1|+|x-y_2|)^{mn}}|f_1(y_1)\times f_2(y_2)|dy_1dy_2\times\prod_{k=3}^m\int_{2B}|f_k(y_k)|dy_k\\
		&\lesssim&r^{-n(m-2)}\int_{(2B)^c}\frac{|f_1(y_1)|}{|x_0-y_1|^n}dy_1\times
		\int_{(2B)^c}\frac{|b(x)-b(y_2)|}{|x_0-y_2|^n}|f_2(y_2)|dy_2\times\prod_{k=3}^m\int_{2B}|f_k(y_k)|dy_k.
	\end{eqnarray*}
	Combining (\ref{E-0}), (\ref{E-a}) with (\ref{E-21}) and then using H{\"o}lder's inequality on mixed Lebesgue spaces, we get
	\begin{eqnarray*}
		&&\|T^b_{m,i}(f_1^{\infty},f_2^{\infty},f^0_3,\cdots,f^0_m)\|_{L^{\vec{q}}(B)}\\
		&\lesssim&r^{\sum_{j=1}^n\frac{1}{q_j}}\times r^{-n(m-2)}\int_{(2B)^c}\frac{|f_1(y_1)|}{|x_0-y_1|^n}dy_1\\
		&\times&
		\int_{(2B)^c}\frac{|b(x)-b(y_2)|}{|x_0-y_2|^n}|f_2(y_2)|dy_2\times\prod_{k=3}^m\int_{2B}|f_k(y_k)|dy_k\\
		&\lesssim&r^{\sum_{j=1}^n\frac{1}{q_j}}\times \int_{(2B)^c}\frac{|f_1(y_1)|}{|x_0-y_1|^n}dy_1\\
		&\times&
		\int_{(2B)^c}\frac{|b(x)-b(y_2)|}{|x_0-y_2|^n}|f_2(y_2)|dy_2\times\prod_{k=3}^mr^{-\sum_{j=1}^n\frac{1}{q_{kj}}}\|f_k\|_{L^{\vec{q_k}}(2B)}\\
		&\lesssim&
		\|b\|_{\rm BMO}r^{\sum_{j=1}^n\frac{1}{q_j}}\int_{2r}^\infty \l(1+\ln\frac{t}{r}\r)t^{-1-\sum_{j=1}^n\frac{1}{q_{ij}}}
		\|f_i\|_{L^{\vec{q_i}}(B(x_0,t))}dt \nonumber\\
		&\times&\prod_{k=1,k\neq i}^m\int_{2r}^\infty t^{-1-\sum_{j=1}^n\frac{1}{q_{kj}}}
		\|f_k\|_{L^{\vec{q_k}}(B(x_0,t))}dt.
	\end{eqnarray*}
	From the estimates of $I$ and $II$, we get the desired result (\ref{E-ball-2}).
\end{proof}

Now we give the boundedness of $T^b_{m,i}$ on product generalized mixed Morrey spaces.
\begin{theorem}\label{T-T3}
	Let $m\geq2$, $1<\vec{q_k}<\infty$ for all $k=1,\cdots,m$, $1/\vec{q}=1/\vec{q_1}+\cdots+1/\vec{q_m}$, and $\varphi,\varphi_k~(k=1,\cdots,m): \bR^n\times (0,\infty)\rightarrow(0,\infty)$ be Lebesgue measurable functions satisfying
 \begin{eqnarray}\label{condition-2}
		&&\int_r^\infty\l(1+\ln{\frac{t}{r}}\r)\frac{\varphi_i(x,t)}{t}dt
		\times\prod_{k=1,k\neq i}^m\int_r^\infty\frac{\varphi_k(x,t)}{t}dt\lesssim \varphi(x,r).
		\end{eqnarray} 
	If $b\in{\rm BMO}$, $T^b_{m,i}$ is a multi-sublinear operator satisfying condition (\ref{D-multisub-commu}), and bounded from  $L^{\vec{q_1}}(\bR^n)\times\cdots\times L^{\vec{q_m}}(\bR^n)$ to $L^{\vec{q}}(\bR^n)$, then $T^b_{m,i}$ is also bounded from $M^{\varphi_1}_{\vec{q_1}}(\bR^n)\times\cdots\times M^{\varphi_m}_{\vec{q_m}}(\bR^n)$ to
	$M^{\varphi}_{\vec{q}}(\bR^n)$.
\end{theorem}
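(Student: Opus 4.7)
The plan is to follow the blueprint already laid out in the proof of Theorem \ref{T-T1}, using Lemma \ref{L-T3} in place of Lemma \ref{L-T1} and condition (\ref{condition-2}) in place of condition (\ref{condition-1}). The two extra features to accommodate are the logarithmic factor $1+\ln(t/r)$ appearing in the $i$-th integral, and the fact that one of the $m$ factors in the product plays a distinguished role.

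First I would start from the pointwise ball estimate (\ref{E-ball-2}), namely
\begin{eqnarray*}
\|T^b_{m,i}(\vec{f})\|_{L^{\vec{q}}(B(x,r))}
&\lesssim& r^{\sum_{j=1}^n\frac{1}{q_j}}\int_{r}^\infty \l(1+\ln\tfrac{t}{r}\r) t^{-1-\sum_{j=1}^n\frac{1}{q_{ij}}}\|f_i\|_{L^{\vec{q_i}}(B(x,t))}\,dt\\
&&\times\prod_{k=1,k\neq i}^m\int_{r}^\infty t^{-1-\sum_{j=1}^n\frac{1}{q_{kj}}}\|f_k\|_{L^{\vec{q_k}}(B(x,t))}\,dt,
\end{eqnarray*}
and divide by $\varphi(x,r)\,\|\chi_{B(x,r)}\|_{L^{\vec{q}}}$. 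Since $\|\chi_{B(x,r)}\|_{L^{\vec{q}}}\sim r^{\sum_{j=1}^n 1/q_j}$, the prefactor $r^{\sum_j 1/q_j}$ on the right is absorbed.

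Next, in every integrand I would insert the multiplier $\varphi_k(x,t)/\varphi_k(x,t)$ and rewrite $t^{-\sum_{j=1}^n 1/q_{kj}}\sim\|\chi_{B(x,t)}\|_{L^{\vec{q_k}}}^{-1}$, so that the integrand for the $k$-th factor becomes
\[
\frac{\varphi_k(x,t)}{t}\cdot\Bigl(\varphi_k(x,t)^{-1}\|\chi_{B(x,t)}\|_{L^{\vec{q_k}}}^{-1}\|f_k\|_{L^{\vec{q_k}}(B(x,t))}\Bigr),
\]
and similarly for $k=i$ with the additional weight $1+\ln(t/r)$. The bracketed quantity is bounded by $\|f_k\|_{M^{\varphi_k}_{\vec{q_k}}}$ by the very definition of the generalized mixed Morrey norm, so I would pull it outside the $t$-integral. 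This leaves, in each of the $m$ factors, an integral in $t$ of either $\varphi_k(x,t)/t$ or $(1+\ln(t/r))\varphi_i(x,t)/t$.

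Finally, I would apply hypothesis (\ref{condition-2}) to control the product of the $m$ remaining integrals by $\varphi(x,r)$, which exactly cancels the leading $\varphi(x,r)^{-1}$. Taking the supremum over all balls $B(x,r)$ then yields
\[
\|T^b_{m,i}(\vec{f})\|_{M^\varphi_{\vec{q}}}\lesssim \|b\|_{\mathrm{BMO}}\prod_{k=1}^m\|f_k\|_{M^{\varphi_k}_{\vec{q_k}}},
\]
which is the desired boundedness. No real obstacle is anticipated here: the argument is a direct mimic of Theorem \ref{T-T1}, and the only place where care is needed is the careful tracking of the logarithmic weight $1+\ln(t/r)$ through the $i$-th factor so that it lands against the corresponding integral in the assumption (\ref{condition-2}); everything else is bookkeeping on the exponents $\sum_{j=1}^n 1/q_{kj}$.
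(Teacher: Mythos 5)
Your proposal is correct and follows essentially the same route as the paper's own proof: starting from the ball estimate of Lemma \ref{L-T3}, inserting the factors $\varphi_k(x,t)/\varphi_k(x,t)$ together with $t^{-\sum_{j=1}^n 1/q_{kj}}\sim\|\chi_{B(x,t)}\|_{L^{\vec{q_k}}}^{-1}$, extracting the Morrey norms, and invoking condition (\ref{condition-2}) to cancel $\varphi(x,r)$. The only cosmetic difference is that you make the absorption of the prefactor $r^{\sum_j 1/q_j}$ and the tracking of the logarithmic weight explicit, which the paper leaves implicit.
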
 
\begin{proof}
By using Lemma \ref{L-T3} and (\ref{condition-2}), we have
	\begin{eqnarray*}
		\|T^b_{m,i}(\vec{f})\|_{M^{\varphi}_{\vec{q}}}&\lesssim& \sup_{x\in\bR^n,r>0}\varphi(x,r)^{-1}\int_{r}^\infty \l(1+\ln\frac{t}{r}\r)t^{-1-\sum_{j=1}^n\frac{1}{q_{ij}}}
		\|f_i\|_{L^{\vec{q_i}}(B(x_0,t))}dt\nonumber\\
		&\times&\prod_{k=1,k\neq i}^m\int_{r}^\infty t^{-1-\sum_{j=1}^n\frac{1}{q_{kj}}}
		\|f_k\|_{L^{\vec{q_k}}(B(x_0,t))}dt \\
		&=& \sup_{x\in\bR^n,r>0}\varphi(x,r)^{-1}\int_{r}^\infty \l(1+\ln\frac{t}{r}\r)\frac{\varphi_i(x,t)}{t}
		\varphi_i(x,t)^{-1}\|\chi_{B(x,t)}\|_{L^{\vec{q_i}}}^{-1}\|f_i\|_{L^{\vec{q_i}}(B(x_0,t))}dt\nonumber\\
		&\times&\prod_{k=1,k\neq i}^m\int_{r}^\infty \frac{\varphi_k(x,t)}{t}
		\varphi_k(x,t)^{-1}\|\chi_{B(x,t)}\|_{L^{\vec{q_k}}}^{-1}\|f_k\|_{L^{\vec{q_k}}(B(x_0,t))}dt \\
		&\lesssim& 
		\prod_{k=1}^m\|f_k\|_{M^{\varphi_k}_{\vec{q_k}}}.
	\end{eqnarray*}
	We are done.
\end{proof}

\section{Some applications}
In this section, we give some applications of our main theorems. We will show that many multi-sublinear operators and their commutators satisfy the assumptions in Theorem \ref{T-T1} and Theorem \ref{T-T3}. Therefore, we can obtain the boundedness of these operators on product generalized mixed Morrey spaces by using our main results.

To prove the boundedness of $M_{m}$, $M^{\vec{b}}_{m}$  and $M^{\vec{b}}_{m,i}$ from $L^{\vec{q_1}}(\bR^n)\times\cdots\times L^{\vec{q_m}}(\bR^n)$ to $L^{\vec{q}}(\bR^n)$, we need the boundedness of $M$ and $M^b$ on mixed norm space $L^{\vec{q}}(\bR^n)$.
\begin{lemma}([\cite{nogayama2019mixed}])\label{l0-A}
	Let $1<\vec{q}<\infty$. Then
	\begin{equation}\label{0-A}
	\|M(f)\|_{L^{\vec{q}}}\lesssim\|f\|_{L^{\vec{q}}}.
	\end{equation}
\end{lemma}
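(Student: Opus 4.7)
The plan is to reduce the $n$-dimensional ball maximal operator to an iteration of one-dimensional Hardy--Littlewood maximal operators and then apply the classical one-dimensional $L^{q_i}$ bound successively in each variable, peeling off the iterated norms of $\|\cdot\|_{L^{\vec q}}$ from inside out.

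First I would dominate $M$ pointwise by the strong maximal function $M_S$, defined by averaging over axis-parallel rectangles containing the point. Since every ball $B(x,r)$ is contained in the cube $\prod_{i=1}^n [x_i-r,x_i+r]$ with comparable measure, $M(f)(x)\lesssim M_S(|f|)(x)$. Then I would use the standard observation that $M_S$ is dominated by the composition of one-dimensional maximal operators in each coordinate:
\begin{equation*}
M_S(f)(x_1,\dots,x_n)\le \bigl(M^{(1)}\circ M^{(2)}\circ\cdots\circ M^{(n)}\bigr)|f|(x_1,\dots,x_n),
\end{equation*}
where $M^{(i)}$ denotes the one-dimensional Hardy--Littlewood maximal operator acting only in the $i$-th variable (all other variables frozen). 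This is a routine consequence of splitting the rectangle average as an iterated integral.

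Given this domination, I would estimate $\|M(f)\|_{L^{\vec q}}$ by unwinding the mixed norm from inside out. Starting with the innermost norm $\|\cdot\|_{L^{q_1}(dx_1)}$, I apply the classical one-dimensional strong $(q_1,q_1)$-bound, valid since $q_1>1$, to move $M^{(1)}$ past the $x_1$-integration at the cost of a constant. Then I apply Fubini to interchange, and proceed inductively: at step $i$ I use that for each fixed $(x_1,\dots,x_{i-1},x_{i+1},\dots,x_n)$, the function
\begin{equation*}
x_i\mapsto \bigl(M^{(i+1)}\circ\cdots\circ M^{(n)}\bigr)|f|(x_1,\dots,x_n)
\end{equation*}
lies in $L^{q_i}(\mathbb R,dx_i)$ after the $L^{q_j}$-norms ($j<i$) have been taken, and apply the 1-D bound in $x_i$. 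After $n$ applications I arrive at $\||f|\|_{L^{\vec q}}=\|f\|_{L^{\vec q}}$, which is the desired conclusion.

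The main obstacle will be the bookkeeping in the iterative peeling step, in particular ensuring that Fubini is applicable at every stage and that the one-dimensional maximal operator in the $i$-th variable commutes (up to the norm estimate) with the $L^{q_j}$-integrations in the other variables. This is essentially Minkowski-type reasoning combined with the fact that $M^{(i)}$ is a positive sublinear operator depending only on $x_i$, so both of these are standard but need to be stated carefully; the actual one-dimensional bound and the pointwise domination by $M_S$ are the only analytic ingredients.
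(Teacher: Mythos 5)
The reduction $M(f)\lesssim M_S(|f|)\le \bigl(M^{(1)}\circ\cdots\circ M^{(n)}\bigr)|f|$ is fine, and so is your first peeling step: the $x_1$-integration is the innermost one in $\|\cdot\|_{L^{\vec q}}$, so the scalar one-dimensional bound applied for a.e.\ fixed $(x_2,\dots,x_n)$, together with monotonicity of the outer norms, removes $M^{(1)}$. The gap is at every later step. To remove $M^{(i)}$ with $i\ge 2$ you need
\begin{equation*}
\Bigl\|\,\bigl\|M^{(i)}H\bigr\|_{L^{(q_1,\dots,q_{i-1})}(dx_1\cdots dx_{i-1})}\,\Bigr\|_{L^{q_i}(dx_i)}\lesssim
\Bigl\|\,\bigl\|H\bigr\|_{L^{(q_1,\dots,q_{i-1})}(dx_1\cdots dx_{i-1})}\,\Bigr\|_{L^{q_i}(dx_i)},
\end{equation*}
i.e.\ boundedness of a maximal operator acting in $x_i$ but pointwise in the inner variables, on a space whose inner norms are taken first. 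This does not follow from Minkowski plus Fubini: Minkowski controls a single average $\frac{1}{|I|}\int_I$ over a fixed interval, but in $M^{(i)}H$ the near-optimal interval depends on $(x_1,\dots,x_{i-1})$, and the inner norm of a supremum dominates, rather than is dominated by, the supremum of the inner norms; moreover "apply Fubini to interchange" is not available, since iterated $L^{q_j}$-norms with unequal exponents cannot be reordered (only one-sided Minkowski-type inequalities hold, and only for suitably ordered exponents). What your induction step implicitly asserts is precisely the continuous-parameter Fefferman--Stein (Banach-lattice-valued) maximal inequality with values in $L^{(q_1,\dots,q_{i-1})}$ --- a genuine theorem, not bookkeeping; it is the same subtlety that makes the classical $\ell^r$-valued Fefferman--Stein inequality nontrivial.

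The lemma itself is true, and your outline can be repaired by importing that vector-valued ingredient (equivalently, the known boundedness of the strong maximal operator on mixed-norm spaces), proved for instance via the weighted estimate $\int(Mf)^pw\lesssim\int|f|^pMw$ and duality. A route more consistent with the tools already in this paper is to forgo the iteration entirely: apply the extrapolation Theorem \ref{D-ext} to the pairs $\bigl(\min\{Mf,k\}\chi_{B(0,k)},|f|\bigr)$, using the scalar weighted bound $\|Mf\|_{L^{q_0}_w}\lesssim\|f\|_{L^{q_0}_w}$ for $w\in A_1\subseteq A_{q_0}$ with $1<q_0<\min_j q_j$, and let $k\to\infty$ by monotone convergence, exactly as is done for $K_m$ in the proof of Theorem \ref{T-4-2}. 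Note that the paper offers no proof of its own here: it simply cites [\cite{nogayama2019mixed}] for (\ref{0-A}).
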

\begin{lemma}([\cite{nogayama2019mixed}])\label{l0-B}
	Let $1<\vec{q}<\infty$ and $b\in {\rm BMO}$. Then
	\begin{equation}\label{0-B}
	\|M^b(f)\|_{L^{\vec{q}}}\lesssim\|b\|_{\rm BMO}\|f\|_{L^{\vec{q}}}.
	\end{equation}
\end{lemma}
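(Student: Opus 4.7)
The plan is to reduce the commutator estimate to the already-cited boundedness of $M$ on $L^{\vec q}$ (Lemma \ref{l0-A}) via a pointwise domination argument. The classical pointwise inequality of P\'{e}rez states that
$$M^b(f)(x)\;\lesssim\;\|b\|_{\rm BMO}\;M(Mf)(x)\qquad \text{for all } x\in\bR^n.$$
Granting this, two successive applications of Lemma \ref{l0-A} give
$$\|M^b f\|_{L^{\vec q}}\;\lesssim\;\|b\|_{\rm BMO}\|M(Mf)\|_{L^{\vec q}}\;\lesssim\;\|b\|_{\rm BMO}\|Mf\|_{L^{\vec q}}\;\lesssim\;\|b\|_{\rm BMO}\|f\|_{L^{\vec q}},$$
which is exactly the desired inequality (\ref{0-B}). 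Note the hypothesis $1<\vec q<\infty$ is used only to invoke Lemma \ref{l0-A}, and it is used twice.

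To justify the pointwise inequality, fix $x$ and a ball $B\ni x$. Using the splitting $b(x)-b(y)=(b(x)-b_B)+(b_B-b(y))$ and the generalized H\"{o}lder inequality on Orlicz spaces with the complementary Young pair $\Phi(t)=e^t-1$ and $\Psi(t)=t\log(e+t)$, together with the John-Nirenberg inequality (which yields $\|b-b_B\|_{\Phi,B}\lesssim \|b\|_{\rm BMO}$), one obtains
$$\frac{1}{|B|}\int_B |b(x)-b(y)||f(y)|\,dy\;\lesssim\;\|b\|_{\rm BMO}\,\|f\|_{\Psi,B}.$$
Taking the supremum over balls $B\ni x$ gives $M^b f(x)\lesssim \|b\|_{\rm BMO}\,M_{L\log L}f(x)$, where $M_{L\log L}$ is the Orlicz maximal operator associated with $\Psi$. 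The elementary Luxemburg-norm estimate $M_{L\log L}f(x)\lesssim M(Mf)(x)$ then closes the argument.

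The main obstacle is the pointwise domination itself, which requires the Orlicz/John--Nirenberg machinery; the reduction to Lemma \ref{l0-A} is immediate. An alternative route that bypasses Orlicz spaces entirely is to fix $r\in(1,\min_i q_i)$ and prove the weaker pointwise bound $M^b f(x)\lesssim\|b\|_{\rm BMO}\,M_r f(x)$ with $M_r f=(M|f|^r)^{1/r}$, via H\"{o}lder with exponent $r'$ inside the average and a single use of John-Nirenberg. One then applies Lemma \ref{l0-A} to $|f|^r\in L^{\vec q/r}(\bR^n)$ and invokes the scaling identity $\|M_r f\|_{L^{\vec q}}=\|M|f|^r\|^{1/r}_{L^{\vec q/r}}$ to conclude. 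This variant avoids Orlicz norms but forces a choice of $r$ depending on $\vec q$, whereas the $M^2$-route gives the estimate uniformly in $\vec q$ from a single pointwise inequality.
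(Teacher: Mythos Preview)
The paper does not supply a proof of this lemma; it is quoted from \cite{nogayama2019mixed}. Your argument, however, rests on a pointwise inequality that is \emph{false}. The claimed bound $M^{b}f(x)\lesssim\|b\|_{\rm BMO}\,M^{2}f(x)$ fails already for $b(y)=\log|y|$, $x=e_{1}$ and $f=\chi_{B(0,R)}$: since $0\le f\le 1$ one has $M^{2}f(e_{1})\le 1$, whereas the ball $B=B(e_{1},R{+}1)$ gives
\[
\frac{1}{|B|}\int_{B}|b(e_{1})-b(y)|\,|f(y)|\,dy
=\frac{1}{|B|}\int_{B(0,R)}\bigl|\log|y|\bigr|\,dy\ \sim\ \log R\longrightarrow\infty .
\]
The flaw in your Orlicz--H\"older step is that after splitting $b(x)-b(y)=(b(x)-b_{B})+(b_{B}-b(y))$ only the second piece is controlled by $\|b\|_{\rm BMO}\,\|f\|_{\Psi,B}$; the first piece contributes $|b(x)-b_{B}|\cdot\tfrac{1}{|B|}\int_{B}|f|$, and $|b(x)-b_{B}|$ is not bounded by a multiple of $\|b\|_{\rm BMO}$ uniformly over balls $B\ni x$. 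Your $M_{r}$ variant has exactly the same defect: H\"older leaves the factor $\bigl(\tfrac{1}{|B|}\int_{B}|b(x)-b(y)|^{r'}dy\bigr)^{1/r'}$, and separating $b_{B}$ again produces the uncontrolled term $|b(x)-b_{B}|$.

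A route that does work within the paper's own framework is extrapolation. The weighted inequality $\|M^{b}f\|_{L^{q_{0}}_{w}}\lesssim\|b\|_{\rm BMO}\|f\|_{L^{q_{0}}_{w}}$ for $w\in A_{q_{0}}$, $1<q_{0}<\infty$, is classical; fixing any $q_{0}\in(1,\min_{i}q_{i})$ and restricting to $w\in A_{1}\subset A_{q_{0}}$, Theorem~\ref{D-ext} then yields $\|M^{b}f\|_{L^{\vec q}}\lesssim\|b\|_{\rm BMO}\|f\|_{L^{\vec q}}$. This is precisely parallel to how the paper obtains Theorems~\ref{T-4-2} and~\ref{T-4-3} for $K_{m}$ and $K^{\vec b}_{m,i}$ via Lemmas~\ref{L-4-1}--\ref{L-4-2}.
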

Now we give the boundedness of multi-sublinear maximal operator and its commutator on product generalized mixed Morrey spaces.
\begin{theorem}\label{App-T-1}
	Let $m\geq2$, $1<\vec{q_i}<\infty$ for all $i=1,\cdots,m$ and $1/\vec{q}=1/\vec{q_1}+\cdots+1/\vec{q_m}$. Then $M_{m}$ is
	bounded from $L^{\vec{q_1}}(\bR^n)\times\cdots\times L^{\vec{q_m}}(\bR^n)$ to $L^{\vec{q}}(\bR^n)$.
\end{theorem}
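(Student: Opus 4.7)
The plan is to reduce the multi-sublinear maximal operator to a pointwise product of Hardy--Littlewood maximal operators, and then combine Hölder's inequality on mixed Lebesgue spaces with the known $L^{\vec{q}}$ boundedness of $M$ (Lemma \ref{l0-A}).

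First I would observe the trivial pointwise bound
\begin{equation*}
M_m(\vec{f})(x)=\sup_{r>0}\prod_{i=1}^{m}\frac{1}{|B(x,r)|}\int_{B(x,r)}|f_i(y_i)|\,dy_i
\leq \prod_{i=1}^{m}\sup_{r>0}\frac{1}{|B(x,r)|}\int_{B(x,r)}|f_i(y_i)|\,dy_i=\prod_{i=1}^{m}M(f_i)(x),
\end{equation*}
since the supremum of a product of nonnegative functions over a common parameter is at most the product of the individual suprema. This is the standard way to dominate $M_m$, and it lets us forget the multilinear structure entirely.

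Next I would take $L^{\vec{q}}$-norms on both sides and invoke Hölder's inequality on mixed Lebesgue spaces (as recorded in \cite{1961The}): since $1/\vec{q}=1/\vec{q_1}+\cdots+1/\vec{q_m}$, one has
\begin{equation*}
\Big\|\prod_{i=1}^{m}M(f_i)\Big\|_{L^{\vec{q}}}\lesssim \prod_{i=1}^{m}\|M(f_i)\|_{L^{\vec{q_i}}}.
\end{equation*}
Applying Lemma \ref{l0-A} coordinate by coordinate (each $\vec{q_i}$ satisfies $1<\vec{q_i}<\infty$) then yields $\|M(f_i)\|_{L^{\vec{q_i}}}\lesssim \|f_i\|_{L^{\vec{q_i}}}$, and multiplying these estimates together gives the desired bound
\begin{equation*}
\|M_m(\vec{f})\|_{L^{\vec{q}}}\lesssim \prod_{i=1}^{m}\|f_i\|_{L^{\vec{q_i}}}.
\end{equation*}

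There is essentially no hard step here: the pointwise reduction is immediate, and both ingredients (the mixed-norm Hölder inequality from the Benedek--Panzone theory and the mixed-norm boundedness of $M$) are already stated in the paper. The only place to be slightly careful is the form of Hölder's inequality used, since the exponents $\vec{q_i}$ satisfy the componentwise reciprocal identity $1/q_j=\sum_{i=1}^{m}1/q_{ij}$ for each $j=1,\dots,n$, which is exactly the hypothesis needed to iterate the one-dimensional Hölder inequality in each variable of the mixed norm.
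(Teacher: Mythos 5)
Your proposal is correct and coincides with the paper's own proof: the same pointwise domination $M_m(\vec{f})\leq\prod_{i=1}^m M(f_i)$, followed by Hölder's inequality on mixed Lebesgue spaces and Lemma \ref{l0-A} applied factor by factor. Nothing is missing.
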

\begin{proof}
	The simple inequality
	\begin{eqnarray*}
		M_{m}(\vec{f})(x)\leq\prod_{i=1}^m M(f_i)(x)
	\end{eqnarray*}
	together with Lemma \ref{l0-A} and H{\"o}lder's inequality on mixed Lebesgue spaces gives us that for all $\vec{f}=(f_1,\cdots,f_m)\in L^{\vec{q_1}}(\bR^n)\times\cdots\times L^{\vec{q_m}}(\bR^n)$,
	\begin{eqnarray*}
		\|M_{m}(\vec{f})\|_{L^{\vec{q}}}\leq \prod_{i=1}^m \|M(f_i)\|_{L^{\vec{q_i}}}\lesssim \prod_{i=1}^m\|f_i\|_{L^{\vec{q_i}}}.
	\end{eqnarray*}
\end{proof}
\begin{theorem}\label{App-T-2}
	Let $m\geq2$, $1<\vec{q_k}<\infty$ for all $k=1,\cdots,m$ and $1/\vec{q}=1/\vec{q_1}+\cdots+1/\vec{q_m}$. If $\vec{b}=(b_1,\cdots,b_m)\in{\rm BMO}^m$, then both $M^{\vec{b}}_{m}$ and $M^{\vec{b}}_{m,i}$ are 
	bounded from $L^{\vec{q_1}}(\bR^n)\times\cdots\times L^{\vec{q_m}}(\bR^n)$ to $L^{\vec{q}}(\bR^n)$.
\end{theorem}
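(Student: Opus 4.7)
The plan is to mirror the strategy of Theorem \ref{App-T-1}, replacing one of the Hardy--Littlewood maximal factors by the BMO maximal commutator $M^{b_i}$, and then invoking Lemma \ref{l0-A} and Lemma \ref{l0-B} together with H\"older's inequality on mixed Lebesgue spaces.

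First I would observe the elementary pointwise estimate
\begin{eqnarray*}
M^{\vec{b}}_{m,i}(\vec{f})(x) \leq M^{b_i}(f_i)(x)\prod_{k=1,k\neq i}^{m} M(f_k)(x),
\end{eqnarray*}
which is immediate from the definition of $M^{\vec{b}}_{m,i}$ once we decouple the two suprema defining it. Taking the mixed norm $\|\cdot\|_{L^{\vec{q}}}$ of both sides and applying H\"older's inequality on mixed Lebesgue spaces (with the exponent decomposition $1/\vec{q}=1/\vec{q_1}+\cdots+1/\vec{q_m}$), we reduce matters to the one-variable boundedness of $M$ on each $L^{\vec{q_k}}(\mathbb{R}^n)$ (Lemma \ref{l0-A}) and of $M^{b_i}$ on $L^{\vec{q_i}}(\mathbb{R}^n)$ (Lemma \ref{l0-B}). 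This yields
\begin{eqnarray*}
\|M^{\vec{b}}_{m,i}(\vec{f})\|_{L^{\vec{q}}} \lesssim \|b_i\|_{\mathrm{BMO}}\prod_{k=1}^m \|f_k\|_{L^{\vec{q_k}}} \lesssim \|\vec{b}\|_{\mathrm{BMO}^m}\prod_{k=1}^m \|f_k\|_{L^{\vec{q_k}}}.
\end{eqnarray*}

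Finally, summing over $i=1,\dots,m$ and using $M^{\vec{b}}_m(\vec{f}) = \sum_{i=1}^m M^{\vec{b}}_{m,i}(\vec{f})$ together with the triangle inequality yields the corresponding bound for $M^{\vec{b}}_m$, which completes the proof. No genuine obstacle arises here: the only mildly subtle point is making sure that the decoupling in the pointwise estimate is legitimate (which is trivial because the two suprema in the definition of $M^{\vec{b}}_{m,i}$ act on independent factors), and that H\"older on mixed Lebesgue spaces is applicable with the exponents $\vec{q_k}$, which is precisely the content recalled from \cite{1961The} in the preliminaries.
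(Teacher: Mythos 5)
Your argument is correct and follows essentially the same route as the paper: the same pointwise domination $M^{\vec{b}}_{m,i}(\vec{f})(x)\leq M^{b_i}(f_i)(x)\prod_{k\neq i}M(f_k)(x)$, followed by H\"older's inequality on mixed Lebesgue spaces and Lemmas \ref{l0-A} and \ref{l0-B}, with the bound for $M^{\vec{b}}_m$ obtained by summing over $i$. No issues.
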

\begin{proof}
	We only need to prove the boundedness of $M^{\vec{b}}_{m,i}$, and the boundedness of $M^{\vec{b}}_{m}$ follows directly. Obviously,
	\begin{eqnarray}\label{G-4_2}
	M^{\vec{b}}_{m,i}(\vec{f})(x)\leq M^{b_i}(f_i)(x)\times\prod_{j=1,j\neq i}^m M(f_j)(x).
	\end{eqnarray}
	Inequality (\ref{G-4_2}) together with Lemma \ref{l0-A}, Lemma \ref{l0-B} and H{\"o}lder's inequality on mixed Lebesgue spaces implies that for all $\vec{f}=(f_1,\cdots,f_m)\in L^{\vec{q_1}}(\bR^n)\times\cdots\times L^{\vec{q_m}}(\bR^n)$,
	\begin{eqnarray*}
		\|M^{\vec{b}}_{m,i}(\vec{f})\|_{L^{\vec{q}}}\leq \|M^{b_i}(f_i)\|_{L^{\vec{q_i}}}\times\prod_{j=1,j\neq i}^m \|M(f_j)\|_{L^{\vec{q_j}}}\lesssim \|b\|_{\rm BMO}\prod_{j=1}^m\|f_j\|_{L^{\vec{q_j}}}.
	\end{eqnarray*}
\end{proof}

To verify the boundedness of $K_{m}$, $K^{\vec{b}}_{m}$ and $K^{\vec{b}}_{m,i}$ from  $L^{\vec{q_1}}(\bR^n)\times\cdots\times L^{\vec{q_m}}(\bR^n)$ to $L^{\vec{q}}(\bR^n)$, we need the extrapolation theory on mixed Lebesgue spaces.
The extrapolaton theory on mixed Lebesgue spaces relies on the classical $A_p$ weight (see [\cite{grafakos2008classical}]). 
\begin{definition}
	For $1<p<\infty$, a locally integrable function $w: \bR^n\rightarrow (0,\infty)$ is said to be an $A_p$ weight if
	\begin{eqnarray*}
		[w]_{A_p}=\sup_{B}\l(\frac{1}{|B|}\int_Bw(x)dx\r)\l(\frac{1}{|B|}\int_Bw(x)^{-\frac{p'}{p}}dx\r)^{\frac{p}{p'}}<\infty,
	\end{eqnarray*}
	where $B$ is taken over all the balls in $\bR^n$.
	
	A a locally integrable function $w: \bR^n\rightarrow (0,\infty)$ is said to be an $A_1$ weight if for any ball $B\subseteq \bR^n$,
	$$\frac{1}{|B|}\int_Bw(y)dy\leq Cw(x),~~~ a.e. x\in B$$
	for some constant $C>0$. The infimum of all such $C$ is denoted by $[w]_{A_1}$. We denote $A_\infty$ by the union of all $A_p~(1\leq p<\infty)$ functions.
\end{definition}
For a non-negative function $w\in L^1_{\rm{loc}}(\bR^n)$ and $0<p<\infty$, the weighted Lebesgue space $L^p_w(\bR^n)$ consists of all $f\in \mathcal{M}(\bR^n)$ such that $$\|f\|_{L^p_w}:=\left(\int_{\bR^n}|f(x)|^pw(x)dx\right)^{1/p}<\infty.$$

Now we give the extrapolation theorem on mixed Lebesgue spaces. 
\begin{theorem}\label{D-ext}
	Let $0<q_0<\infty$, $\vec{q}=(q_1,\cdots,q_n)\in(0,\infty)^n$, and $\mathfrak{F}$  be a family of ordered pairs of non-negative measurable functions. Suppose for all $w\in A_1$ and $(f,g)\in\mathfrak{F}$, we have
	\begin{equation}\label{3-a}
	\int_{\bR^n}f(x)^{q_0}w(x)dx\lesssim \int_{\bR^n}g(x)^{q_0}w(x)dx.
	\end{equation}
	Then if $q_0<\vec{q}$, the inequality
	\begin{equation}\label{3-b}
	\|f\|_{L^{\vec{q}}}\lesssim \|g\|_{L^{\vec{q}}}
	\end{equation}
	holds for all pairs $(f,g)\in\mathfrak{F}$ such that the left-hand side is finite.
\end{theorem}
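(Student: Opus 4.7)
The plan is to adapt Rubio de Francia's classical extrapolation argument to the mixed-norm setting. The two ingredients that take the place of the usual $L^{p}$ facts are the duality $(L^{\vec{s}})^{*}=L^{\vec{s}'}$ for $1<\vec{s}<\infty$ (established in [\cite{1961The}]) and the boundedness of the Hardy--Littlewood maximal operator $M$ on mixed Lebesgue spaces (Lemma \ref{l0-A}). Throughout we may assume $\|g\|_{L^{\vec{q}}}<\infty$, since otherwise (\ref{3-b}) is vacuous.

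Set $\vec{s}:=\vec{q}/q_{0}$. The hypothesis $q_{0}<\vec{q}$ forces $1<\vec{s}<\infty$, and hence also $1<\vec{s}'<\infty$. A direct computation with the iterative definition of the mixed Lebesgue norm gives $\|f\|_{L^{\vec{q}}}^{q_{0}}=\|f^{q_{0}}\|_{L^{\vec{s}}}$, and mixed-norm duality then yields
\[
\|f\|_{L^{\vec{q}}}^{q_{0}}=\sup\l\{\int_{\bR^{n}}f(x)^{q_{0}}h(x)dx:\ h\ge 0,\ \|h\|_{L^{\vec{s}'}}\le 1\r\}.
\]
It therefore suffices to prove $\int_{\bR^{n}}f^{q_{0}}h\,dx\lesssim\|g\|_{L^{\vec{q}}}^{q_{0}}$ uniformly over admissible $h$.

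Fix such an $h$ and set $A:=\|M\|_{L^{\vec{s}'}\to L^{\vec{s}'}}$, which is finite by Lemma \ref{l0-A}. Define the Rubio de Francia iterate
\[
Rh(x):=\sum_{k=0}^{\infty}\frac{M^{k}h(x)}{(2A)^{k}}.
\]
Three properties follow from the construction: (i) $h\le Rh$ pointwise (the $k=0$ term); (ii) summing a geometric series gives $\|Rh\|_{L^{\vec{s}'}}\le 2\|h\|_{L^{\vec{s}'}}$; (iii) reindexing the series yields $M(Rh)\le 2A\,Rh$ almost everywhere, so $Rh\in A_{1}$ with $[Rh]_{A_{1}}\le 2A$. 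Applying (\ref{3-a}) with the admissible weight $w=Rh$, together with (i) and H\"older's inequality on mixed Lebesgue spaces, gives
\[
\int_{\bR^{n}}f^{q_{0}}h\,dx\le\int_{\bR^{n}}f^{q_{0}}Rh\,dx\lesssim\int_{\bR^{n}}g^{q_{0}}Rh\,dx\le\|g^{q_{0}}\|_{L^{\vec{s}}}\|Rh\|_{L^{\vec{s}'}}\lesssim\|g\|_{L^{\vec{q}}}^{q_{0}}.
\]
Taking the supremum over $h$ and extracting a $q_{0}$-th root yields (\ref{3-b}).

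The only genuine subtlety is ensuring that (\ref{3-a}) actually delivers useful information on the middle inequality above, i.e.\ that $\int g^{q_{0}}Rh$ is finite; this is guaranteed by the reduction $\|g\|_{L^{\vec{q}}}<\infty$ combined with $\|Rh\|_{L^{\vec{s}'}}\lesssim 1$. All remaining steps are direct transcriptions of the classical scalar Rubio de Francia argument, made legitimate by the fact that both the duality pairing and the $L^{\vec{s}'}$-boundedness of $M$ survive verbatim in the mixed-norm setting.
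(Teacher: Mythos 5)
Your argument is correct and is essentially the proof behind Theorem \ref{D-ext}: the paper itself gives no proof, deferring to [\cite{wei2021boundedness}], and the argument there is the same Rubio de Francia iteration built from the $L^{\vec{s}'}$-boundedness of $M$ (Lemma \ref{l0-A}) and the Benedek--Panzone duality $\|f\|_{L^{\vec{q}}}^{q_0}=\|f^{q_0}\|_{L^{\vec{s}}}=\sup\{\int f^{q_0}h\,dx:\ h\geq0,\ \|h\|_{L^{\vec{s}'}}\leq1\}$. The only point worth stating explicitly is that hypothesis (\ref{3-a}) must be read with an implicit constant depending only on $[w]_{A_1}$ (otherwise no uniform conclusion could follow); your observation that $[Rh]_{A_1}\leq 2A$ uniformly in $h$ is precisely what makes the application of (\ref{3-a}) with $w=Rh$ legitimate.
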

We refer the reader to [\cite{wei2021boundedness}] for the proof of Theorem \ref{D-ext}. 

The following two lemmas offer the weighted norm estimates for $K_{m}$, $K^{\vec{b}}_{m}$ and $K^{\vec{b}}_{m,i}$.
\begin{lemma}\label{L-4-1}
	Let $q>0$ and $w$ be a weight in $A_{\infty}$, then there exists a constant $C>0$ (depending the $A_{\infty}$ constant of $w$) so that the inequality
	\begin{equation*}
	\|K_{m}(\vec{f})\|_{L^q_w}\leq C\|M_m(\vec{f})\|_{L^q_w}
	\end{equation*}
	holds for all bounded funtions $\vec{f}$ with compact support.
\end{lemma}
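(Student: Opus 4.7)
The plan is to follow the classical sharp maximal function approach introduced in the multilinear setting by Lerner, Ombrosi, P\'erez, Torres and Trujillo-Gonz\'alez [\cite{lerner2009new}]. Recall the Fefferman--Stein sharp maximal function $M^{\#}f(x)=\sup_{B\ni x}|B|^{-1}\int_B|f-f_B|$ and, for $0<\delta<1$, its variants $M_\delta f=M(|f|^\delta)^{1/\delta}$ and $M^{\#}_\delta f=M^{\#}(|f|^\delta)^{1/\delta}$. Since $|g|\le M_\delta g$ almost everywhere and since the classical Fefferman--Stein inequality gives $\|M_\delta g\|_{L^q_w}\lesssim \|M^{\#}_\delta g\|_{L^q_w}$ for every $w\in A_\infty$, every $q>0$, and every $g$ with $\|M_\delta g\|_{L^q_w}<\infty$, the whole estimate reduces to establishing the pointwise bound
\begin{equation*}
M^{\#}_\delta(K_m(\vec{f}))(x)\lesssim M_m(\vec{f})(x),\qquad 0<\delta<1/m.
\end{equation*}

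To prove this pointwise estimate, fix $x\in\bR^n$ and a ball $B=B(x_0,r)\ni x$. Split $f_i=f_i^0+f_i^\infty$ with $f_i^0=f_i\chi_{2B}$ and expand
\begin{equation*}
K_m(\vec{f})=K_m(f_1^0,\ldots,f_m^0)+\sum_{\vec{\beta}\neq \vec{0}}K_m(f_1^{\beta_1},\ldots,f_m^{\beta_m}),
\end{equation*}
where the sum runs over $\vec{\beta}\in\{0,\infty\}^m\setminus\{\vec{0}\}$. Take the constant subtracted in the definition of $M^{\#}_\delta$ to be the sum of the mixed terms evaluated at $x_0$. For the fully local term $K_m(f_1^0,\ldots,f_m^0)$, apply Kolmogorov's inequality (available precisely because $\delta<1/m$) together with the weak-type $L^1\times\cdots\times L^1\to L^{1/m,\infty}$ bound of Grafakos--Torres [\cite{grafakos2002multilinear}] to dominate it by $\prod_{i=1}^m|2B|^{-1}\int_{2B}|f_i|\lesssim M_m(\vec{f})(x)$. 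For each mixed term, exploit the regularity of the kernel in its first argument to control the oscillation $|K_m(f_1^{\beta_1},\ldots)(y)-K_m(f_1^{\beta_1},\ldots)(x_0)|$ uniformly in $y\in B$; combined with the kernel size condition $|K(y,y_1,\ldots,y_m)|\lesssim \l(\sum_k|y-y_k|\r)^{-mn}$ and a standard telescoping argument across dyadic annuli $2^{k+1}B\setminus 2^kB$, this produces a further bound of the form $\prod_{i=1}^m M(f_i)(x)\lesssim M_m(\vec{f})(x)$.

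Putting the three ingredients together, we obtain
\begin{equation*}
\|K_m(\vec{f})\|_{L^q_w}\le \|M_\delta(K_m(\vec{f}))\|_{L^q_w}\lesssim \|M^{\#}_\delta(K_m(\vec{f}))\|_{L^q_w}\lesssim \|M_m(\vec{f})\|_{L^q_w},
\end{equation*}
which is the desired conclusion. The hypothesis that each $f_i$ is bounded with compact support is exactly what is needed to guarantee that $K_m(\vec{f})\in L^s(\bR^n)$ for every $s<1/m$ (via the weak-type bound), and hence that $\|M_\delta(K_m(\vec{f}))\|_{L^q_w}$ is finite, legitimizing the Fefferman--Stein step. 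The main obstacle will be the pointwise sharp maximal estimate itself: one must simultaneously invoke Kolmogorov's inequality with the sharp choice $\delta<1/m$ for the diagonal term, the size bound of the kernel for fully exterior mixed terms, and its regularity for the partially exterior mixed terms, and the bookkeeping must be carried out uniformly across all $2^m-1$ mixed configurations.
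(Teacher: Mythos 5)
The paper does not prove this lemma at all: it is quoted from Lerner--Ombrosi--P\'erez--Torres--Trujillo-Gonz\'alez \cite{lerner2009new}, and your argument is essentially the proof given there --- the pointwise estimate $M^{\#}_\delta\bigl(K_m(\vec f)\bigr)\lesssim M_m(\vec f)$ for $0<\delta<1/m$ (Kolmogorov plus the $L^1\times\cdots\times L^1\to L^{1/m,\infty}$ endpoint for the local part, kernel size and regularity with a telescoping sum over annuli for the terms with some $\beta_i=\infty$), followed by the Fefferman--Stein inequality for $A_\infty$ weights. So in substance you have reconstructed the proof of the cited source rather than found a different route.

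One justification in your write-up is incorrect, namely the finiteness needed to legitimize the Fefferman--Stein step. The weak-type bound together with Kolmogorov's inequality gives $K_m(\vec f)\in L^s_{\rm loc}(\bR^n)$ for $s<1/m$, \emph{not} $K_m(\vec f)\in L^s(\bR^n)$ for every $s<1/m$; globally the decay $|K_m(\vec f)(x)|\lesssim |x|^{-mn}$ away from the supports puts $K_m(\vec f)$ in $L^s(\bR^n)$ only for $s>1/m$, and for small $q$ and, say, $w\equiv 1$ the quantity $\|M_\delta(K_m(\vec f))\|_{L^q_w}$ is actually infinite. The standard way to close this step is to observe that the claimed inequality is trivial when $\|M_m(\vec f)\|_{L^q_w}=\infty$, and otherwise to apply the Fefferman--Stein inequality in the form valid for functions whose left-hand side is finite, together with a truncation/limiting argument (truncating the operator or the weight, as in \cite{lerner2009new}), rather than via the claimed global $L^s$ membership. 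With that repair the outline is the correct and standard one.
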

\begin{lemma}\label{L-4-2}
	Let $q>0$ and $w$ be a weight in $A_{\infty}$. Suppose $\vec{b}\in {\rm BMO}^m$. Then there exists a constant $C>0$ (depending the $A_{\infty}$ constant of $w$) so that the inequalitis
	\begin{equation*}
	\|K^{\vec{b}}_{m}(\vec{f})\|_{L^q_w}\leq C\l\|\prod_{j=1}^mM^2(f_j)\r\|_{L^q_w}
	\end{equation*}
	and
	\begin{equation*}
	\|K^{\vec{b}}_{m,i}(\vec{f})\|_{L^q_w}\leq C\l\|\prod_{j=1}^mM^2(f_j)\r\|_{L^q_w}
	\end{equation*}
	hold for all bounded funtions $\vec{f}$ with compact support, where $M^2f_j=M(Mf_j)$.
\end{lemma}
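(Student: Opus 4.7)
The plan is to follow the sharp maximal function technique of Lerner, Ombrosi, P\'erez, Torres, and Trujillo-Gonz\'alez \cite{lerner2009new}, which was tailored precisely for multilinear commutators of this type. Since $K^{\vec{b}}_{m} = \sum_{i=1}^{m} K^{\vec{b}}_{m,i}$, it suffices to prove the estimate for $K^{\vec{b}}_{m,i}$ with constant $C \lesssim \|b_i\|_{\mathrm{BMO}}$; the bound for $K^{\vec{b}}_{m}$ then follows by the triangle inequality and the definition $\|\vec{b}\|_{{\rm BMO}^m} = \sup_i \|b_i\|_{\mathrm{BMO}}$. Because $\vec{f}$ is assumed bounded with compact support, all the $L^q_w$-norms that appear below can be checked to be finite a priori, which legitimizes the application of the Fefferman--Stein inequality.

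First I would fix $0 < \delta < \epsilon < 1/m$ and cite the pointwise sharp maximal function estimate from \cite{lerner2009new}, namely
\[
M^{\sharp}_{\delta}\bigl(K^{\vec{b}}_{m,i}\vec{f}\bigr)(x) \lesssim \|b_i\|_{\mathrm{BMO}}\,\Bigl(\mathcal{M}_{L(\log L)}(\vec{f})(x) + M_{\epsilon}(K_m\vec{f})(x)\Bigr),
\]
where $M^{\sharp}_{\delta} g = \bigl(M^{\sharp}(|g|^{\delta})\bigr)^{1/\delta}$, $M_{\epsilon} g = \bigl(M(|g|^{\epsilon})\bigr)^{1/\epsilon}$, and $\mathcal{M}_{L(\log L)}$ is the multilinear Orlicz-type maximal operator associated with $\Phi(t) = t(1+\log^{+}t)$. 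The next step is to invoke the Fefferman--Stein inequality for $A_{\infty}$ weights: for any $w \in A_{\infty}$ and $0 < q < \infty$, one has $\|M_{\delta} g\|_{L^{q}_{w}} \leq C\|M^{\sharp}_{\delta} g\|_{L^{q}_{w}}$ whenever the left-hand side is finite, and since $|g| \leq M_{\delta} g$ pointwise this gives $\|g\|_{L^{q}_{w}} \leq C\|M^{\sharp}_{\delta} g\|_{L^{q}_{w}}$. Applied with $g = K^{\vec{b}}_{m,i}\vec{f}$, this yields
\[
\|K^{\vec{b}}_{m,i}\vec{f}\|_{L^{q}_{w}} \lesssim \|b_i\|_{\mathrm{BMO}}\Bigl(\|\mathcal{M}_{L(\log L)}(\vec{f})\|_{L^{q}_{w}} + \|M_{\epsilon}(K_m\vec{f})\|_{L^{q}_{w}}\Bigr).
\]

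Finally, I would handle the two summands separately. For the Orlicz maximal term, the standard pointwise domination $\mathcal{M}_{L(\log L)}(\vec{f})(x) \lesssim \prod_{j=1}^{m} M^{2} f_{j}(x)$ gives exactly the right-hand side of the desired inequality. For the second term, another application of Fefferman--Stein together with the pointwise bound $M^{\sharp}_{\epsilon}(K_m\vec{f})(x) \lesssim M_m(\vec{f})(x)$ (again from \cite{lerner2009new}, essentially the analogue of Lemma~\ref{L-4-1} at the sharp-maximal level) reduces matters to controlling $\|M_m\vec{f}\|_{L^{q}_{w}}$, and the trivial pointwise inequality $M_m\vec{f}(x) \leq \prod_{j=1}^{m} M f_{j}(x) \leq \prod_{j=1}^{m} M^{2} f_{j}(x)$ finishes the job. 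The main obstacle, and the reason the approach rests on \cite{lerner2009new}, is the derivation of the multilinear sharp-maximal pointwise estimate above, which requires a delicate decomposition of the commutator structure using the John--Nirenberg inequality and a truncation of $b_i$ on each cube; since that estimate is already available in the literature, the present proof reduces to assembling the quoted ingredients in the order above.
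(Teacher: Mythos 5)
Your proposal is correct and is essentially the paper's proof: the paper does not argue the lemma itself but simply refers to Lerner, Ombrosi, P\'erez, Torres and Trujillo-Gonz\'alez \cite{lerner2009new}, and your sketch (the pointwise sharp-maximal estimate $M^{\sharp}_{\delta}(K^{\vec{b}}_{m,i}\vec{f})\lesssim \|b_i\|_{\rm BMO}(\mathcal{M}_{L(\log L)}(\vec{f})+M_{\epsilon}(K_m\vec{f}))$, two applications of the Fefferman--Stein inequality for $A_\infty$ weights, and the dominations $\mathcal{M}_{L(\log L)}(\vec{f})\lesssim\prod_j M^2 f_j$ and $M^{\sharp}_{\epsilon}(K_m\vec{f})\lesssim M_m(\vec{f})\le\prod_j M^2f_j$) is exactly the argument carried out in that reference. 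The only point to treat with the same care as the cited source is the a priori finiteness of the left-hand sides needed to invoke Fefferman--Stein for general $w\in A_\infty$ and small $q$, which you flag but do not verify in detail.
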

We refer the readers to [\cite{lerner2009new}] for the proof of the above two lemmas. Now we can prove the boundedness of multilinear Calder{\'o}n-Zygmund operators and their commutators on product mixed Lebesgue spaces.
\begin{theorem}\label{T-4-2}
	Let $m\geq2$, $1<\vec{q_i}<\infty$ for all $i=1,\cdots,m$ and $1/\vec{q}=1/\vec{q_1}+\cdots+1/\vec{q_m}$. Then $K_{m}$ is
	bounded from $L^{\vec{q_1}}(\bR^n)\times\cdots\times L^{\vec{q_m}}(\bR^n)$ to $L^{\vec{q}}(\bR^n)$.
\end{theorem}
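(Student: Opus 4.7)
The plan is to combine three ingredients already available in the paper: the pointwise/weighted control of $K_m$ by the multi-sublinear maximal operator $M_m$ provided by Lemma \ref{L-4-1}, the extrapolation theorem on mixed Lebesgue spaces (Theorem \ref{D-ext}), and the mixed-norm boundedness of $M_m$ established in Theorem \ref{App-T-1}.

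First, I would choose an exponent $q_0\in(0,\infty)$ strictly smaller than every component of $\vec{q}$ (for instance $q_0 = \tfrac{1}{2}\min_{1\le j\le n} q_j$). By Lemma \ref{L-4-1}, for every weight $w\in A_1\subset A_\infty$ and every $m$-tuple $\vec{f}=(f_1,\dots,f_m)$ of bounded functions with compact support, one has
\begin{equation*}
\|K_m(\vec{f})\|_{L^{q_0}_w}\lesssim \|M_m(\vec{f})\|_{L^{q_0}_w},
\end{equation*}
with an implicit constant depending only on the $A_1$-constant of $w$. Taking $\mathfrak{F}$ to be the family of all pairs $(K_m(\vec{f}),M_m(\vec{f}))$ arising from such $\vec{f}$ puts us exactly in the setting of Theorem \ref{D-ext} at the base exponent $q_0$.

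Second, since $q_0<\vec{q}$ by construction, Theorem \ref{D-ext} upgrades this weighted estimate to the mixed-norm inequality
\begin{equation*}
\|K_m(\vec{f})\|_{L^{\vec{q}}}\lesssim \|M_m(\vec{f})\|_{L^{\vec{q}}},
\end{equation*}
provided the left-hand side is finite. For $\vec{f}$ bounded and compactly supported this finiteness is not hard to verify: $K_m(\vec{f})$ is controlled by a product of fractional-type integrals away from the support of $\vec{f}$ with integrable decay in each variable, while on a large ball we can use the trivial $L^\infty$ bound together with the compact support. Finally, applying Theorem \ref{App-T-1} yields
\begin{equation*}
\|M_m(\vec{f})\|_{L^{\vec{q}}}\lesssim \prod_{i=1}^m\|f_i\|_{L^{\vec{q_i}}},
\end{equation*}
and combining the two previous displays gives the desired bound for all bounded $\vec{f}$ with compact support.

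The last step is to remove the restriction to bounded, compactly supported functions by a standard density argument, since such functions are dense in each $L^{\vec{q_i}}(\bR^n)$ (as $1<\vec{q_i}<\infty$) and the multilinear operator $K_m$ admits the required multilinear extension from its defining $L^{q_1}\times\cdots\times L^{q_m}\to L^{q}$ boundedness. The main obstacle, as is usual in the Rubio de Francia extrapolation machinery, is precisely the \emph{a priori} finiteness of $\|K_m(\vec{f})\|_{L^{\vec{q}}}$ needed to trigger Theorem \ref{D-ext}; handling this cleanly via truncation of $\vec{f}$ and a monotone passage to the limit (together with Fatou's lemma in the mixed norm) is where one must be careful, but no new analytic input beyond what the excerpt already provides is required.
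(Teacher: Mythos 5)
Your overall strategy is exactly the paper's: extrapolate the weighted estimate of Lemma \ref{L-4-1} via Theorem \ref{D-ext} with a base exponent $q_0$ below every component of $\vec{q}$, control the majorant in mixed norm by Theorem \ref{App-T-1}, and conclude by density of $L^\infty_c$ in $L^{\vec{q_i}}(\bR^n)$. The one step where your sketch does not hold up is precisely the one you flag as delicate: the a priori finiteness of $\|K_m(\vec{f})\|_{L^{\vec{q}}}$ required to apply Theorem \ref{D-ext}. Your justification rests on a ``trivial $L^\infty$ bound'' for $K_m(\vec{f})$ on a large ball, but Calder\'on--Zygmund operators do not map bounded compactly supported data to locally bounded functions (already for $m=1$ the Riesz transforms of $\chi_{B(0,1)}$ blow up logarithmically at the boundary), so that bound is not available; and the repair you propose, truncating the inputs $\vec{f}$, does nothing, since the inputs are already bounded with compact support --- the obstruction lives in the output. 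The paper resolves this by truncating the output instead: it applies the extrapolation to $K^k_m(\vec{f})=\min\{K_m(\vec{f}),k\}\chi_{B(0,k)}$, which is trivially in $L^{\vec{q}}$ because $\|\chi_{B(0,k)}\|_{L^{\vec{q}}}<\infty$, still satisfies the weighted bound of Lemma \ref{L-4-1} by pointwise domination, and then lets $k\to\infty$ by monotone convergence in the mixed norm.

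The gap is repairable within your own scheme if you insist on working with $K_m(\vec{f})$ directly: for $f_i\in L^\infty_c$ the Grafakos--Torres boundedness gives $K_m(\vec{f})\in L^p(\bR^n)$ for every sufficiently large $p$ (choose all $p_i$ large with $1/p=\sum_i 1/p_i$), and on a fixed large ball H\"older's inequality on mixed Lebesgue spaces with $p\geq\max_j q_j$ yields a finite $L^{\vec{q}}$ norm there, while off a ball containing all the supports the size condition (\ref{D-multisub}) gives the decay $|x|^{-mn}$ with $mn>n$, which lies in $L^{\vec{q}}$ at infinity. Either this argument or the paper's output truncation must replace the $L^\infty$ claim; as written, that step fails.
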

\begin{proof}
	Let $\vec{f}=(f_1,\cdots,f_m)\in L^{\vec{q_1}}(\bR^n)\times\cdots\times L^{\vec{q_m}}(\bR^n)$. Since the class of bounded functions with compact support
	are dense in $L^{\vec{q_i}}(\bR^n)~(i=1\cdots,m)$ (see [\cite{1961The}]), it suffices to prove Theorem \ref{T-4-2} for all $f_1,\cdots,f_m\in L^{\infty}_c(\bR^n)$, the bounded functions with compact support.
	
	We define a sequence of operators $\{K^k_{m}\}_{k\in\mathbb{N}}$, where $$K^k_{m}(\vec{f})(x)=\min\{K_{m}(\vec{f}),k\}\chi_{B(0,k)}(x),$$
	and consider the following fimily
	$$\mathfrak{F}=\l\{\l(K^k_{m}(\vec{f}),\prod_{i=1}^mM(f_i)\r):~\vec{f}\in(L^{\infty}_c(\bR^n))^m,~k\in\mathbb{N}^+\r\}.$$
	It follows from Lemma \ref{L-4-1} that for any $0<p_0<\infty$ and $w\in A_1\subseteq A_{\infty}$,
	\begin{eqnarray*}
		\|K_{m}(\vec{f})\|_{L^{p_0}_w}\lesssim\|M_m(\vec{f})\|_{L^{p_0}_w}
		\lesssim\|\prod_{i=1}^mM(f_i)\|_{L^{p_0}_w},
	\end{eqnarray*}
	for every pair $\l(K^k_{m}(\vec{f}),\prod_{i=1}^mM(f_i)\r)\in\mathfrak{F}$.
	
	So, to use Theorem \ref{D-ext} for all ordered pairs in $\mathfrak{F}$, we need to check 
	$\|K^k_{m}(\vec{f})\|_{L^{\vec{q}}}<\infty$ for every $\vec{f}\in (L^{\infty}_c(\bR^n))^m$.
	It is always the case since $\|\chi_{B(0,k)}\|_{L^{\vec{q}}}<\infty$.	
	
	Now, we can apply Theorem \ref{D-ext} to each pair in $\mathfrak{F}$ and get		
	\begin{eqnarray*}
		\|K^k_{m}(\vec{f})\|_{L^{\vec{q}}}
		\lesssim\l\|\prod_{i=1}^mM(f_i)\r\|_{L^{\vec{q}}}
	\end{eqnarray*}
	for all $\vec{f}\in (L^{\infty}_c(\bR^n))^m$.
	From the proof of Theorem \ref{App-T-1}, we know 
	\begin{eqnarray*}
		\l\|\prod_{i=1}^m M(f_i)\r\|_{L^{\vec{q}}}\lesssim \prod_{i=1}^m\|f_i\|_{L^{\vec{q_i}}}. 
	\end{eqnarray*}		
	Hence
	\begin{eqnarray*}
		\|K^k_{m}(\vec{f})\|_{L^{\vec{q}}}
		\lesssim \prod_{i=1}^m\|f_i\|_{L^{\vec{q_i}}}
	\end{eqnarray*}
	for all $\vec{f}\in (L^{\infty}_c(\bR^n))^m$.
	Noting that ${K^k_{m}(\vec{f})}(x)$ increases almost everywhere to $K_{m}(\vec{f})(x)$, we get from the monotone convergence theorem on mixed Lebesgue spaces (see [\cite{1961The}]) that 
	\begin{eqnarray*}
		\|K_{m}(\vec{f})\|_{L^{\vec{q}}}=\lim_{k\rightarrow\infty}\|K^k_{m}(\vec{f})\|_{L^{\vec{q}}}
		\lesssim \prod_{i=1}^m\|f_i\|_{L^{\vec{q_i}}}.
	\end{eqnarray*}
\end{proof}
\begin{theorem}\label{T-4-3}
	Let $m\geq2$, $1<\vec{q_k}<\infty$ for all $k=1,\cdots,m$ and $1/\vec{q}=1/\vec{q_1}+\cdots+1/\vec{q_m}$. If $\vec{b}=(b_1,\cdots,b_m)\in {\rm BMO}^m$, then both $K^{\vec{b}}_{m}$ and $K^{\vec{b}}_{m,i}$ are 
	bounded from $L^{\vec{q_1}}(\bR^n)\times\cdots\times L^{\vec{q_m}}(\bR^n)$ to $L^{\vec{q}}(\bR^n)$.
\end{theorem}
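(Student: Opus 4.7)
The plan is to mimic the proof of Theorem \ref{T-4-2} with Lemma \ref{L-4-2} replacing Lemma \ref{L-4-1}, and to dominate by the iterated maximal operator $M^2 = M\circ M$ instead of by $M$. Since $K^{\vec{b}}_{m} = \sum_{i=1}^{m} K^{\vec{b}}_{m,i}$, it suffices by the triangle inequality to handle $K^{\vec{b}}_{m,i}$ for a fixed $i$. By the density of bounded compactly supported functions in mixed Lebesgue spaces (see \cite{1961The}), I will further reduce to $\vec{f}=(f_1,\ldots,f_m)\in(L^{\infty}_{c}(\bR^n))^{m}$.

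As in Theorem \ref{T-4-2}, I introduce the truncations
\begin{equation*}
K^{\vec{b},k}_{m,i}(\vec{f})(x) = \min\bigl\{|K^{\vec{b}}_{m,i}(\vec{f})(x)|, k\bigr\}\,\chi_{B(0,k)}(x),
\end{equation*}
which belong to $L^{\vec{q}}(\bR^n)$ for every $k\in\mathbb{N}^+$ since $\|\chi_{B(0,k)}\|_{L^{\vec{q}}}<\infty$. Consider the family
\begin{equation*}
\mathfrak{F} = \Bigl\{\Bigl(K^{\vec{b},k}_{m,i}(\vec{f}),\, \prod_{j=1}^{m}M^{2}(f_j)\Bigr): \vec{f}\in(L^{\infty}_{c}(\bR^n))^{m},\ k\in\mathbb{N}^+\Bigr\}.
\end{equation*}
Fix any $0<p_0<\infty$ and $w\in A_1\subseteq A_{\infty}$. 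By the pointwise bound $K^{\vec{b},k}_{m,i}(\vec{f})\leq |K^{\vec{b}}_{m,i}(\vec{f})|$ and Lemma \ref{L-4-2}, the hypothesis (\ref{3-a}) of the extrapolation Theorem \ref{D-ext} is satisfied with exponent $p_0$ for every pair in $\mathfrak{F}$. Since the left-hand side is finite by construction, Theorem \ref{D-ext} yields
\begin{equation*}
\|K^{\vec{b},k}_{m,i}(\vec{f})\|_{L^{\vec{q}}} \lesssim \Bigl\|\prod_{j=1}^{m}M^{2}(f_j)\Bigr\|_{L^{\vec{q}}}.
\end{equation*}

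To conclude, I apply H\"older's inequality on mixed Lebesgue spaces together with two iterations of Lemma \ref{l0-A} to get
\begin{equation*}
\Bigl\|\prod_{j=1}^{m}M^{2}(f_j)\Bigr\|_{L^{\vec{q}}} \leq \prod_{j=1}^{m}\|M^{2}(f_j)\|_{L^{\vec{q_j}}} \lesssim \prod_{j=1}^{m}\|f_j\|_{L^{\vec{q_j}}}.
\end{equation*}
Since $K^{\vec{b},k}_{m,i}(\vec{f})\nearrow |K^{\vec{b}}_{m,i}(\vec{f})|$ almost everywhere as $k\to\infty$, the monotone convergence theorem for mixed Lebesgue spaces \cite{1961The} gives the desired bound for $K^{\vec{b}}_{m,i}$, and summation over $i$ settles $K^{\vec{b}}_{m}$. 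The only nonroutine step is verifying that Theorem \ref{D-ext} can be legitimately applied to $\mathfrak{F}$; the truncation $K^{\vec{b},k}_{m,i}$ is precisely what guarantees finiteness of $\|K^{\vec{b},k}_{m,i}(\vec{f})\|_{L^{\vec{q}}}$, so that extrapolation is not vacuous, and this is the main technical subtlety in adapting the scalar proof to the nonpositive commutator setting.
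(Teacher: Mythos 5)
Your proposal is correct and follows essentially the same route as the paper: the paper's proof of this theorem is precisely the remark that, in view of Lemma \ref{L-4-2}, one repeats the truncation--extrapolation argument of Theorem \ref{T-4-2} with $\prod_{j=1}^m M^2(f_j)$ in place of $\prod_{j=1}^m M(f_j)$, concluding via H\"older's inequality, the iterated use of Lemma \ref{l0-A}, and monotone convergence. You have simply written out the details the paper omits, including the harmless replacement of the truncation by $\min\{|K^{\vec{b}}_{m,i}(\vec{f})|,k\}\chi_{B(0,k)}$ and the reduction of $K^{\vec{b}}_{m}$ to the $K^{\vec{b}}_{m,i}$ by the triangle inequality.
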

\begin{proof}
	In view of Lemma \ref{L-4-2}, the proof is just an iteration of the proof for Theorem \ref{T-4-2}, so we omit the details.
\end{proof}
By using Theorems \ref{T-T1}, \ref{T-T3} and Theorems \ref{App-T-1}, \ref{App-T-2}, we have the boundedness of multi-sublinear maximal operator and its commutators on product generalized mixed Morrey spaces.
\begin{theorem}\label{C-1}
	Let $m\geq2$, $1<\vec{q_i}<\infty$ for all $i=1,\cdots,m$, $1/\vec{q}=1/\vec{q_1}+\cdots+1/\vec{q_m}$, and $\varphi,\varphi_i~(i=1,\cdots,m): \bR^n\times (0,\infty)\rightarrow(0,\infty)$ be Lebesgue measurable functions satisfying condition (\ref{condition-1}).
	Then $M_{m}$ is bounded from product spaces $M^{\varphi_1}_{\vec{q_1}}(\bR^n)\times\cdots\times M^{\varphi_m}_{\vec{q_m}}(\bR^n)$ to
	$M^{\varphi}_{\vec{q}}(\bR^n)$.
\end{theorem}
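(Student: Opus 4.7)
The plan is to recognize that Theorem \ref{C-1} is a direct corollary of Theorem \ref{T-T1} combined with Theorem \ref{App-T-1}, so the task reduces to verifying the two hypotheses of Theorem \ref{T-T1} for the multi-sublinear maximal operator $M_m$.

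First, I would verify that $M_m$ satisfies the size condition (\ref{D-multisub}). Given $\vec{f}$ with compact support and $x\notin\cap_{j=1}^m\mathrm{supp}f_j$, for any ball $B(x,r)$ we have
$$\prod_{i=1}^m\frac{1}{|B(x,r)|}\int_{B(x,r)}|f_i(y_i)|\,dy_i \lesssim \int_{(\bR^n)^m}\frac{|f_1(y_1)\cdots f_m(y_m)|}{(|x-y_1|+\cdots+|x-y_m|)^{mn}}\,d\vec{y}$$
after taking the supremum over $r>0$, by a standard argument comparing the averages over $B(x,r)$ with the kernel $|(x-y_1,\cdots,x-y_m)|^{-mn}$; this is the observation the author already flagged as obvious in the introduction.

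Second, I would invoke Theorem \ref{App-T-1}, which already established that $M_m$ is bounded from $L^{\vec{q_1}}(\bR^n)\times\cdots\times L^{\vec{q_m}}(\bR^n)$ to $L^{\vec{q}}(\bR^n)$ whenever $1<\vec{q_i}<\infty$ and $1/\vec{q}=1/\vec{q_1}+\cdots+1/\vec{q_m}$, using the pointwise majorization $M_m(\vec{f})\leq\prod_{i=1}^m M(f_i)$, the mixed-norm boundedness of $M$ from Lemma \ref{l0-A}, and H\"older's inequality on mixed Lebesgue spaces.

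With these two facts and the assumed condition (\ref{condition-1}) on $(\varphi_1,\cdots,\varphi_m,\varphi)$ in hand, Theorem \ref{T-T1} applies directly and yields
$$\|M_m(\vec{f})\|_{M^{\varphi}_{\vec{q}}} \lesssim \prod_{i=1}^m\|f_i\|_{M^{\varphi_i}_{\vec{q_i}}},$$
which is the desired boundedness. Since both prerequisites are already established in the paper, no genuine obstacle remains; the only minor subtlety is to note that the size-condition verification for $M_m$ is uniform in the ball $B(x,r)$ and therefore survives taking the supremum in $r$, allowing $M_m$ to fit cleanly into the framework of Theorem \ref{T-T1}.
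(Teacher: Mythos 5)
Your proposal is correct and follows essentially the same route as the paper: the paper's proof of Theorem \ref{C-1} is exactly the two-step reduction you describe, citing Theorem \ref{App-T-1} for the mixed Lebesgue boundedness and then applying Theorem \ref{T-T1} under condition (\ref{condition-1}). Your explicit verification that $M_m$ satisfies the size condition (\ref{D-multisub}) is a detail the paper only flags as obvious in the introduction, and your argument for it is the standard correct one.
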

\begin{proof}
	From Theorem \ref{App-T-1}, we can get the boundedness of $M_{m}$ from $L^{\vec{q_1}}(\bR^n)\times\cdots\times L^{\vec{q_m}}(\bR^n)$ to $L^{\vec{q}}(\bR^n)$. Therefore, the conditions of Theorem \ref{T-T1} are fulfilled for $M_{m}$. As a consequence of Theorem \ref{T-T1}, $M_{m}$ is bounded from $M^{\varphi_1}_{\vec{q_1}}(\bR^n)\times\cdots\times M^{\varphi_m}_{\vec{q_m}}(\bR^n)$ to
	$M^{\varphi}_{\vec{q}}(\bR^n)$.
\end{proof}
\begin{theorem}\label{C-2}
	Let $m\geq2$, $1<\vec{q_k}<\infty$ for all $k=1,\cdots,m$, $1/\vec{q}=1/\vec{q_1}+\cdots+1/\vec{q_m}$, and $\varphi,\varphi_k~(k=1,\cdots,m): \bR^n\times (0,\infty)\rightarrow(0,\infty)$ be Lebesgue measurable functions satisfying condition (\ref{condition-2}).
	If $\vec{b}\in {\rm BMO}^m$, then $M^{\vec{b}}_{m}$ and $M^{\vec{b}}_{m,i}$ are both bounded from product spaces $M^{\varphi_1}_{\vec{q_1}}(\bR^n)\times\cdots\times M^{\varphi_m}_{\vec{q_m}}(\bR^n)$ to
	$M^{\varphi}_{\vec{q}}(\bR^n)$.
\end{theorem}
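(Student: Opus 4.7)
The plan is to derive Theorem \ref{C-2} as a direct corollary of Theorem \ref{T-T3}, in exactly the same way that Theorem \ref{C-1} was obtained from Theorem \ref{T-T1}. Since Theorem \ref{T-T3} already provides the passage from mixed-Lebesgue boundedness plus the kernel size condition (\ref{D-multisub-commu}) to boundedness on product generalized mixed Morrey spaces under hypothesis (\ref{condition-2}), the only remaining work is to verify these two ingredients for $M^{\vec{b}}_{m,i}$.

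First, I would fix $i \in \{1,\ldots,m\}$ and check that $M^{\vec{b}}_{m,i}$ satisfies the size condition (\ref{D-multisub-commu}) with $b$ replaced by $b_i \in {\rm BMO}$. This is immediate from the definition: the joint supremum can be bounded by the double supremum appearing in $M^{\vec{b}}_{m,i}$, and for every ball $B(x,r)$ containing $y_1,\ldots,y_m$, the product of characteristic functions $\prod_{k=1}^m |B(x,r)|^{-1}\chi_{B(x,r)}(y_k)$ is pointwise dominated, up to a dimensional constant, by $|(x-y_1,\ldots,x-y_m)|^{-mn}$. Taking suprema gives the integral majorization (\ref{D-multisub-commu}) with symbol $b_i$.

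Second, I would invoke Theorem \ref{App-T-2}, which provides the boundedness of $M^{\vec{b}}_{m,i}$ from $L^{\vec{q_1}}(\bR^n)\times\cdots\times L^{\vec{q_m}}(\bR^n)$ to $L^{\vec{q}}(\bR^n)$ when $\vec{b}\in {\rm BMO}^m$. With both hypotheses verified, Theorem \ref{T-T3} (applied with $T^{b}_{m,i} = M^{\vec{b}}_{m,i}$ and $b = b_i$) yields the claimed boundedness of $M^{\vec{b}}_{m,i}$ from the product generalized mixed Morrey space to $M^{\varphi}_{\vec{q}}(\bR^n)$ whenever condition (\ref{condition-2}) holds.

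Finally, for the full commutator $M^{\vec{b}}_{m}$, the identity $M^{\vec{b}}_{m}(\vec{f}) = \sum_{i=1}^{m} M^{\vec{b}}_{m,i}(\vec{f})$ together with the triangle inequality in the Banach space $M^{\varphi}_{\vec{q}}(\bR^n)$ (established earlier) produces the bound for $M^{\vec{b}}_{m}$ with an extra factor of $m$. No genuine obstacle is expected; the argument is essentially a bookkeeping application of results already proved, and the only mildly delicate point is the pointwise comparison of the maximal-type kernel to the Calder\'on--Zygmund-type kernel in step one, which is standard.
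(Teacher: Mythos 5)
Your route is the same as the paper's: reduce to $M^{\vec{b}}_{m,i}$ via $M^{\vec{b}}_{m}=\sum_{i=1}^m M^{\vec{b}}_{m,i}$ and the triangle inequality, quote Theorem \ref{App-T-2} for the $L^{\vec{q_1}}(\bR^n)\times\cdots\times L^{\vec{q_m}}(\bR^n)\to L^{\vec{q}}(\bR^n)$ bound, and feed this into Theorem \ref{T-T3} under condition (\ref{condition-2}); the paper's proof is exactly this citation chain. The only place where you go beyond the paper is your step one, and there the sketch is shaky. The paper defines $M^{\vec{b}}_{m,i}$ as a product of two \emph{independent} suprema (one radius for the factor containing $b_i$, another for the remaining averages), and your phrase ``the joint supremum can be bounded by the double supremum'' goes in the wrong direction for what is needed: it bounds the common-radius supremum by the operator, whereas the size condition (\ref{D-multisub-commu}) requires the operator to be dominated by the kernel integral. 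The pointwise comparison $\prod_{k=1}^m |B(x,r)|^{-1}\chi_{B(x,r)}(y_k)\lesssim |(x-y_1,\cdots,x-y_m)|^{-mn}$ does yield (\ref{D-multisub-commu}) when all $m$ averages are taken over the \emph{same} ball, but a product of averages over balls of very different radii is not controlled by that kernel (consider $f_i$ concentrated near $x$ and the other $f_k$ supported far away), so the two-suprema operator as literally defined is not covered by your argument. The paper sidesteps this point by simply asserting in the introduction that $M^{\vec{b}}_{m,i}$ satisfies (\ref{D-multisub-commu}); to make your step one airtight you should either read the definition with a single common radius $r$ (the standard convention, under which your kernel comparison is exactly right) or verify the hypotheses of Theorem \ref{T-T3} for the two-suprema version by a separate argument. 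Apart from this definitional wrinkle, which the paper shares, your proof matches the paper's.
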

\begin{proof}
	Obviously, we only need to prove the boundedness of $M^{\vec{b}}_{m,i}$. From Theorem \ref{App-T-2}, $M^{\vec{b}}_{m,i}$ is bounded from $L^{\vec{q_1}}(\bR^n)\times\cdots\times L^{\vec{q_m}}(\bR^n)$ to $L^{\vec{q}}(\bR^n)$. Therefore, the conditions of Theorem \ref{T-T3} are fulfilled for  $M^{\vec{b}}_{m,i}$. As a consequence of Theorem \ref{T-T3}, $M^{\vec{b}}_{m,i}$ is bounded from $M^{\varphi_1}_{\vec{q_1}}(\bR^n)\times\cdots\times M^{\varphi_m}_{\vec{q_m}}(\bR^n)$ to
	$M^{\varphi}_{\vec{q}}(\bR^n)$. 
\end{proof}
Similarly, for multilinear Calder{\'o}n-Zygmund operators and their commutators, we have the following conclusions.
\begin{theorem}\label{C-3}
	Let $m\geq2$, $1<\vec{q_i}<\infty$ for all $i=1,\cdots,m$, $1/\vec{q}=1/\vec{q_1}+\cdots+1/\vec{q_m}$, and $\varphi,\varphi_i~(i=1,\cdots,m): \bR^n\times (0,\infty)\rightarrow(0,\infty)$ be Lebesgue measurable functions satisfying condition (\ref{condition-1}).
	Then $K_{m}$ is bounded from product spaces $M^{\varphi_1}_{\vec{q_1}}(\bR^n)\times\cdots\times M^{\varphi_m}_{\vec{q_m}}(\bR^n)$ to
	$M^{\varphi}_{\vec{q}}(\bR^n)$.
\end{theorem}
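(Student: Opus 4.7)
The plan is to derive Theorem \ref{C-3} as a direct corollary of Theorem \ref{T-T1} applied to the multilinear Calder\'on--Zygmund operator $K_m$, in exact parallel with the proof template employed for the maximal operator in Theorem \ref{C-1}. Recall that Theorem \ref{T-T1} has precisely two structural hypotheses on a multi-sublinear operator: the pointwise size condition (\ref{D-multisub}), and boundedness from $L^{\vec{q_1}}(\bR^n)\times\cdots\times L^{\vec{q_m}}(\bR^n)$ into $L^{\vec{q}}(\bR^n)$. Both ingredients have already been assembled in the preceding sections.

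First, I would record that $K_m$ satisfies (\ref{D-multisub}). This is immediate from the kernel bound
\[
|K(x,y_1,\ldots,y_m)|\lesssim\l(\sum_{k=1}^m|x-y_k|\r)^{-mn}
\]
built into the definition of a multilinear Calder\'on--Zygmund operator, combined with the elementary equivalence $\sum_{k=1}^m|x-y_k|\sim|(x-y_1,\ldots,x-y_m)|$ on $(\bR^n)^m$. The paper has in fact already flagged this observation immediately after introducing (\ref{D-multisub}).

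Second, the required product mixed Lebesgue bound is exactly Theorem \ref{T-4-2}, which was proved via the extrapolation theorem \ref{D-ext} together with the weighted domination estimate $\|K_m(\vec f)\|_{L^q_w}\lesssim\|M_m(\vec f)\|_{L^q_w}$ of Lemma \ref{L-4-1} and the weighted bound for $\prod_i M(f_i)$. Since $\varphi$ and the $\varphi_i$ satisfy the integral condition (\ref{condition-1}) by assumption, all hypotheses of Theorem \ref{T-T1} are in place, and invoking it yields the boundedness of $K_m$ from $M^{\varphi_1}_{\vec{q_1}}(\bR^n)\times\cdots\times M^{\varphi_m}_{\vec{q_m}}(\bR^n)$ to $M^{\varphi}_{\vec{q}}(\bR^n)$.

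I do not foresee any genuine obstacle here: the theorem is essentially a packaging step that assembles Theorem \ref{T-T1} with Theorem \ref{T-4-2}. The only subtlety worth mentioning explicitly is that the constant in (\ref{D-multisub}) must be independent of $\vec f$ and $x$, which is guaranteed by the uniform dimensional constant in the kernel estimate of $K_m$. The proof can therefore be written in essentially one line, in the same format as Theorem \ref{C-1}.
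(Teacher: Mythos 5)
Your proposal is correct and follows exactly the paper's own route: the paper likewise deduces Theorem \ref{C-3} by noting that $K_m$ satisfies the size condition (\ref{D-multisub}), invoking Theorem \ref{T-4-2} for the product mixed Lebesgue boundedness, and then applying Theorem \ref{T-T1} under condition (\ref{condition-1}).
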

\begin{proof}
	From Theorem \ref{T-4-2}, we obtain the boundedness of $K_{m}$ from $L^{\vec{q_1}}(\bR^n)\times\cdots\times L^{\vec{q_m}}(\bR^n)$ to $L^{\vec{q}}(\bR^n)$. Therefore, the conditions of Theorem \ref{T-T1} are fulfilled for the operator $K_{m}$. As a consequence of Theorem \ref{T-T1}, $K_{m}$ is bounded from $M^{\varphi_1}_{\vec{q_1}}(\bR^n)\times\cdots\times M^{\varphi_m}_{\vec{q_m}}(\bR^n)$ to
	$M^{\varphi}_{\vec{q}}(\bR^n)$.
\end{proof}
\begin{theorem}\label{C-4}
	Let $m\geq2$, $1<\vec{q_k}<\infty$ for all $k=1,\cdots,m$, $1/\vec{q}=1/\vec{q_1}+\cdots+1/\vec{q_m}$, and $\varphi,\varphi_k~(k=1,\cdots,m): \bR^n\times (0,\infty)\rightarrow(0,\infty)$ be Lebesgue measurable functions satisfying condition (\ref{condition-2}).
	If $\vec{b}\in {\rm BMO}^m$, then $K^{\vec{b}}_{m}$ and $K^{\vec{b}}_{m,i}$ are both bounded from product spaces $M^{\varphi_1}_{\vec{q_1}}(\bR^n)\times\cdots\times M^{\varphi_m}_{\vec{q_m}}(\bR^n)$ to
	$M^{\varphi}_{\vec{q}}(\bR^n)$.
\end{theorem}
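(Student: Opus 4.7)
The plan is to reduce Theorem \ref{C-4} to an immediate application of Theorem \ref{T-T3}, following the same template as Theorem \ref{C-2} (for the maximal commutator) and Theorem \ref{C-3} (for $K_m$ without symbol). Since the statement splits naturally into a claim about $K^{\vec{b}}_{m,i}$ for each fixed $i\in\{1,\dots,m\}$ and a claim about the sum $K^{\vec{b}}_{m}=\sum_{i=1}^{m}K^{\vec{b}}_{m,i}$, it suffices to prove the bound for a single $K^{\vec{b}}_{m,i}$ and then sum over $i$, using that a finite sum of bounded operators between the same Banach spaces (recall $M^{\varphi}_{\vec{q}}$ is a Banach space by the proposition in Section 2) is again bounded.

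First, I would verify that the hypotheses of Theorem \ref{T-T3} are met for $T^{b_i}_{m,i}=K^{\vec{b}}_{m,i}$. The size condition \eqref{D-multisub-commu} is already recorded for $K^{\vec{b}}_{m,i}$ in the introduction, right after the definition of \eqref{D-multisub-commu}. The regularity and growth conditions on $\varphi$, $\varphi_k$ and the relations $1<\vec{q_k}<\infty$, $1/\vec{q}=\sum 1/\vec{q_k}$ are assumed in the statement of Theorem \ref{C-4} itself, so they carry over with nothing to check.

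The only remaining hypothesis of Theorem \ref{T-T3} is the boundedness of $K^{\vec{b}}_{m,i}$ from $L^{\vec{q_1}}(\bR^n)\times\cdots\times L^{\vec{q_m}}(\bR^n)$ to $L^{\vec{q}}(\bR^n)$, but this is precisely the content of Theorem \ref{T-4-3} (which was obtained by combining the pointwise estimate of Lemma \ref{L-4-2}, the extrapolation result Theorem \ref{D-ext}, and the $L^{\vec{q}}$ bound for $M^2$ coming from iterating Lemma \ref{l0-A}). With every hypothesis of Theorem \ref{T-T3} now in place for $K^{\vec{b}}_{m,i}$, I would apply Theorem \ref{T-T3} directly to conclude that $K^{\vec{b}}_{m,i}$ is bounded from $M^{\varphi_1}_{\vec{q_1}}(\bR^n)\times\cdots\times M^{\varphi_m}_{\vec{q_m}}(\bR^n)$ to $M^{\varphi}_{\vec{q}}(\bR^n)$.

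There is no real obstacle here: the entire machinery has been built in Sections 3 and 4, and the work done in the proof of Theorem \ref{T-4-3} already accounts for the only non-trivial analytical input, namely the weighted pointwise control $\|K^{\vec{b}}_{m}(\vec{f})\|_{L^q_w}\lesssim \|\prod_j M^2(f_j)\|_{L^q_w}$. The proof I would write is therefore a one-paragraph verification paralleling the proof of Theorem \ref{C-2}, replacing the reference to Theorem \ref{App-T-2} by a reference to Theorem \ref{T-4-3}, and noting explicitly that since $K^{\vec{b}}_{m}=\sum_{i=1}^{m}K^{\vec{b}}_{m,i}$, the bound for the full commutator follows by the triangle inequality in $M^{\varphi}_{\vec{q}}(\bR^n)$.
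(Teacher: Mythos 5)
Your proposal matches the paper's own proof: it verifies the mixed Lebesgue boundedness of $K^{\vec{b}}_{m,i}$ via Theorem \ref{T-4-3}, notes the size condition (\ref{D-multisub-commu}) holds, applies Theorem \ref{T-T3}, and obtains the bound for $K^{\vec{b}}_{m}=\sum_{i=1}^m K^{\vec{b}}_{m,i}$ by summing the individual estimates. This is essentially the same (correct) argument, with your remark on the triangle inequality merely making explicit what the paper calls an ``iteration.''
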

\begin{proof}
	From Theorem \ref{T-4-3}, we obtain the boundedness of $K^{\vec{b}}_{m,i}$ from $L^{\vec{q_1}}(\bR^n)\times\cdots\times L^{\vec{q_m}}(\bR^n)$ to $L^{\vec{q}}(\bR^n)$. Therefore, the conditions of Theorem \ref{T-T3} are fulfilled for the operator $K^{\vec{b}}_{m,i}$. As a consequence of Theorem \ref{T-T3}, $K^{\vec{b}}_{m,i}$ is bounded from $M^{\varphi_1}_{\vec{q_1}}(\bR^n)\times\cdots\times M^{\varphi_m}_{\vec{q_m}}(\bR^n)$ to
	$M^{\varphi}_{\vec{q}}(\bR^n)$. The boundedness of $K^{\vec{b}}_{m}$ is just an iteration of the boundedness  of $K^{\vec{b}}_{m,i}~(i=1,\cdots,m)$.
\end{proof}

\section*{Data availability statement}
Data sharing not applicable to this article as no datasets were generated or analysed during the current study.

\section*{Acknowledgements}
The author would like to express his deep gratitude to the anonymous referees for their careful reading of the manuscript and their comments and suggestions. This work is supported by the Natural Science Foundation of Henan Province of China (No. 202300410338) and the Nanhu Scholar Program for Young Scholars of Xinyang Normal University.\\

\end{document}